\newcommand{\dd}{\mathrm{d}}
\newcommand{\E}{\mathbb{E}}
\newcommand{\1}{\textbf{1}}
\newcommand{\R}{\mathbb{R}}
\newcommand{\p}[1]{\mathbb{P}\left( #1 \right)}
\newcommand{\scal}[2]{\left\langle #1, #2 \right\rangle}
\newcommand{\red}{}
\newcommand{\cE}{\mathcal{E}}
\DeclareMathOperator{\Var}{Var}
\DeclareMathOperator{\vol}{vol}
\newtheorem{theorem}{Theorem}
\newtheorem{lemma}[theorem]{Lemma}
\newtheorem{corollary}[theorem]{Corollary}
\theoremstyle{remark}
\newtheorem{remark}[theorem]{Remark}
\theoremstyle{definition}
\title{From simplex slicing to sharp reverse H\"older inequalities
}
\author{James Melbourne}
\address{(JM) Department of Probability and Statistics, Centro de Investigaci\'on en Matem\'aticas (CIMAT), Mexico.\vspace*{-1em}}
\author{Michael Roysdon}
\address{(MR) Department of Mathematics, Applied Mathematics, and Statistics, Case Western Reserve University, Cleveland, OH 44106, USA
\newline and
Department of Mathematical Sciences, University of Cincinnati, Cincinnati, OH 45221, USA.
\vspace*{-1em}
}
\author{Colin Tang}
\author{Tomasz Tkocz}
\address{(CT \& TT) Department of Mathematical Sciences, Carnegie Mellon University; Pittsburgh, PA 15213, USA.}
\email{ttkocz@math.cmu.edu}
\date{\today}
\begin{document}

\begin{abstract} 
Simplex slicing (Webb, 1996) is a sharp upper bound on the volume of central hyperplane sections of the regular simplex. We extend this to sharp bounds in the probabilistic framework of negative moments, and beyond, of centred log-concave random variables, establishing a curious phase transition of the extremising distribution for new sharp reverse H\"older-type inequalities.
\end{abstract}

\maketitle

\begin{center}
\begin{large}
{\red \emph{Dedicated to Keith Ball, on his 65$^{th}$ birthday.}}
\end{large}
\end{center}

\bigskip

\begin{footnotesize}
\noindent {\em 2020 Mathematics Subject Classification.} Primary 52A40; Secondary 60E15, 52B12.

\noindent {\em Key words. regular simplex, volumes of sections, moment comparison, log-concave distribution} 
\end{footnotesize}

\bigskip

\section{Introduction}

\subsection{Motivation}

The problem of finding minimal and maximal volume sections --- \emph{critical sections} --- of various convex bodies has received significant attention over the last several decades. This topic, being somewhat tangential to the broad area of geometric tomography, has nevertheless played a pivotal role in its development, as witnessed for instance by a strikingly simple {\red counterexample} to the Busemann-Petty problem thanks to Ball's famed cube slicing result (see \cite{Ball-cube, Ball-BP} for details). 

Notably, the study of critical sections has also been fruitful in developing robust methods, with Fourier analytic ideas at the core (see \cite{Kol-book}), only recently to be {\red enhanced} by a nascent  probabilistic point of view involving negative moments of weighted sums of random variables (see, e.g. \cite{CKT, CNT, ENT3}). We refer to the recent survey \cite{NT-surv} showcasing this approach, as well as contextualising it further, with comprehensive references and historical account. This very line of thought serves as the main motivation for this paper.

Our starting point is Webb's simplex slicing result from \cite{W} and the question whether it admits a \emph{probabilistic extension} to negative moments, {\red akin to} the aforementioned Ball's cube slicing generalised in such a way in \cite{CKT}. 

Concretely, let us consider the regular $n$-dimensional simplex
\[ 
\Delta_n = \left\{x \in \R^{n+1}, \ \sum_{j=1}^{n+1} x_j = 1, \ x_1, \dots, x_{n+1} \geq 0\right\}
 \]
embedded in the hyperplane $\mathcal{H} = \{x \in \R^{n+1}, \ \sum x_j = 1\}$ in $\R^{n+1}$ (that is, the vertices of $\Delta_n$ are the standard basis vectors $e_1, \dots, e_{n+1}$).

Webb in \cite{W} showed that the maximal volume  \emph{central} section of the regular simplex is attained at hyperplanes containing all but two vertices of the simplex, namely
\[ 
\max_{{\red L}} \ \vol_{n-1}(\Delta_n \cap H) = \frac{\sqrt{n+1}}{(n-1)!}\cdot\frac{1}{\sqrt2},
\]
where the maximum is taken over all affine subspaces {\red $L$} of $\mathcal{H}$ of (relative) codimension $1$ constrained to pass through the barycentre $\frac{1}{n+1}(1, \dots, 1)$ of $\Delta_n$. Plainly, every such hyperplane {\red $L$} extends to a codimension $1$ subspace of $\R^{n+1}$ by taking the affine hull of {\red $L$} and the origin, yielding a hyperplane $a^\perp = \{x \in \R^{n+1}, \ \scal{a}{x} = 0\}$ in $\R^{n+1}$ with an outer-normal vector $a = (a_1, \dots, a_{n+1})$ satisfying $\sum_{j=1}^{n+1} a_j = 0$ (so that $a^\perp$ contains the barycentre of $\Delta_n$). (We use the standard Euclidean structure given by the inner product $\scal{x}{y} = \sum x_jy_j$ and resulting Euclidean norm $|x| = \sqrt{\scal{x}{x}}$ for arbitrary vectors $x$ and $y$.) With this identification and the normalisation $|a| = 1$, we have the following formula for volume of sections, instrumental in Webb's work,
\begin{equation}\label{eq:vol-formula}
\vol_{n-1}(\Delta_n \cap a^\perp) = \frac{\sqrt{n+1}}{(n-1)!}\cdot f_{\sum a_j\cE_j}(0).
 \end{equation}
For completeness, we include a standard derivation in the appendix. Here $f_{\sum a_j\cE_j}$ is (the continuous version of) the density of the random variable $\sum_{j=1}^{n+1} a_j\cE_j$, and $\cE_1, \dots, \cE_{n+1}$ are independent identically distributed (i.i.d.) standard exponential random variables (that is with density $e^{-x}\1_{(0,+\infty)}(x)$ on $\R$). Webb's result thus amounts to the following sharp upper bound on the density at $0$ of weighted sums of i.i.d. exponentials $\cE_j$,
\begin{equation}\label{eq:Webb}
f_{\sum a_j\cE_j}(0) \leq \frac{1}{\sqrt2},
 \end{equation}
for all unit vectors $a$ with $\sum a_j = 0$, with equality attained if and only if $a = \frac{e_j \pm e_k}{\sqrt{2}}$, for some $j \neq k$.

At the heart of the aforementioned negative moments paradigm lies the following elementary fact: for an integrable function $f$ on $\R$ which is, say, continuous at $0$, we have
\[ 
f(0) = \lim_{p \searrow -1} \frac{1+p}{2}\int_{\R} |x|^pf(x) \dd x,
 \]
for instance see Lemma 4 in \cite{HL} as well as Lemma 4.3 in \cite{CNT} for a multivariate version.
In view of this observation, Webb's result can be restated in {\red yet another} equivalent form, as the bound
\begin{equation}\label{eq:Webb-lim}
 \lim_{p \searrow -1} \frac{1+p}{2}\E\left|\sum_{j=1}^{n+1} a_j \cE_j\right|^p \leq \frac{1}{\sqrt2} =  \lim_{p \searrow -1} \frac{1+p}{2}\E\left|\frac{\cE_1-\cE_2}{\sqrt2}\right|^p.
\end{equation}
for all unit vectors $a$ with $\sum a_j = 0$. Does this bound continue to hold without taking the limit, say for all \emph{fixed} $p$ in some neighbourhood of $-1$?

\subsection{Main results.}
Recall that a real-valued random variable $X$ is called log-concave, if it is continuous with a log-concave density $f$ on $\R$, that is a function of the form $f = e^{-\phi}$ with a convex function $\phi\colon \R \to (-\infty, +\infty]$. For instance, a uniform random variable on an interval, or a Gaussian random variable is log-concave, and most importantly for this discussion, exponential random variables are log-concave. Moreover, it is well-known that convolutions of log-concave functions are log-concave (by the Pr\'ekopa-Leindler inequality), thus sums of independent log-concave random variables are log-concave. They naturally play a prominent role in modern convex geometry, see the monographs \cite{AGM, BGVV} for background. Needless to say, the weighted sums 
\[
X = \sum a_j\cE_j
\]
of independent exponential random variables $\cE_j$ have naturally {\red arisen} in simplex slicing, as just considered. These are log-concave random variables. Owing to the geometric constraints imposed on the weights $a_j$, they are centred, {\red that is,} with \emph{mean $0$},
\[ 
\E X = \sum a_j = 0,
 \]
and they have \emph{variance $1$},
\[ 
\Var(X) = \sum a_j^2 = 1.
 \]

Our main results address the question we left hanging. 

\begin{theorem}\label{thm:Lp-L2}
For every $-1 < p \leq 1$ and every mean $0$ log-concave random variable $X$, we have
\[ 
\|X\|_p \geq 2^{-1/2}\Gamma(p+1)^{1/p}\|X\|_2,
 \]
which is sharp with equality attained by a standard double-exponential random variable $X$ (with density $\frac{1}{2}e^{-|x|}$ on $\R$).
\end{theorem}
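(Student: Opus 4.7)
Normalize so that $\Var X = 1$. Let $g(x) = \frac{1}{\sqrt 2}e^{-\sqrt 2|x|}$ denote the density of the Laplace distribution with variance $1$, for which the theorem is an equality. A crucial observation is that $f$ (the density of $X$) and $g$ match on the first three moments, so $\int(f-g) = \int x(f-g) = \int x^2(f-g) = 0$; consequently, for any quadratic polynomial $q(x) = \alpha + \beta x + \gamma x^2$,
\[
\int_\R|x|^p(f-g)\,\dd x = \int_\R\bigl(|x|^p - q(x)\bigr)(f-g)\,\dd x.
\]
The theorem is equivalent to $\int|x|^p(f-g)\,\dd x \geq 0$ for $p \in (0,1]$ (and $\leq 0$ for $p \in (-1,0)$, the sign reversal coming from $\|\cdot\|_p = (\E|\cdot|^p)^{1/p}$ with $1/p < 0$), so the proof reduces to finding $q=q_p$ such that $(|x|^p - q_p(x))(f(x)-g(x))$ has a constant, favorable sign pointwise.

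The argument proceeds in two parts. Part~(a) is a sign-change lemma for $f-g$: log-concavity of $f$ (so $\phi := -\log f$ is convex) combined with the three moment matches forces $\log(f/g) = \sqrt 2|x| + \tfrac12\log 2 - \phi(x)$ to have a specific sign pattern. Since $\phi - (\sqrt 2|x| + \tfrac12\log 2)$ is convex on each of $(-\infty,0]$ and $[0,\infty)$, it has at most two zeros on each half-line, so $f-g$ has at most a bounded number of sign changes, with a definite pattern. In particular, capturing the $p\to -1^+$ limit of the theorem, one obtains the pointwise bound $f(0) \leq g(0) = 1/\sqrt 2$ --- precisely the generalization of Webb's simplex-slicing inequality \eqref{eq:Webb} from sums of exponentials to arbitrary mean-zero log-concave densities of variance $1$, which is where Webb's result enters the proof. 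Part~(b) is a one-variable construction: for each $p \in (-1,1]$, concavity of $x\mapsto x^p$ on $[0,\infty)$ ensures that $|x|^p - q_p(x)$, restricted to either half-line, is concave (for $\gamma_p>0$) and hence has at most two zeros there; one then chooses the coefficients of $q_p$ so that these zeros align with the sign changes of $f-g$, yielding $(|x|^p - q_p)(f-g) \geq 0$ pointwise.

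\textbf{The hard part} is Part~(a): establishing the precise sign pattern with the correct orientation near $0$. Counting sign changes of $\phi - (\sqrt 2 |x| + c)$ via convexity on each half-line is straightforward, but anchoring the inequality $f(0)\leq 1/\sqrt 2$ (the sign of $f-g$ at the origin) is the essential new content extending Webb's theorem beyond sums of i.i.d.\ exponentials. The restriction $p \leq 1$ appears naturally in Part~(b): for $p > 1$ the function $|x|^p$ becomes convex rather than concave on each half-line, so the comparison quadratic $q_p$ with the required sign structure ceases to exist, signalling the phase transition of the extremising distribution promised in the abstract.
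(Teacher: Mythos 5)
Your high-level strategy—write $\E|X|^p - \E|Y|^p = \int(|x|^p - q(x))(f-g)\,\dd x$ with $Y$ Laplace, $q$ quadratic, and argue the integrand is pointwise of one sign—is exactly the ``direct $L_2$-constraint'' approach that the paper explicitly sets aside as ``perhaps intractable, without any further nontrivial ideas.'' The difficulty is concrete and arises in your Part (a): the sign-change count for $f-g$ is not ``at most 3.'' Since $\phi-\sqrt2|x|$ is convex on each half-line separately (but not across $0$, where $\sqrt2|x|$ has a concave kink), each half-line contributes up to two sign changes, and the total can be four. For instance, take $X$ uniform on $[-\sqrt3,\sqrt3]$ (log-concave, mean $0$, variance $1$): then $f-g$ has sign pattern $(-,+,-,+,-)$ with four sign changes, at $\pm\sqrt3$ and two interior points. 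The three moment constraints you have at your disposal give only a three-parameter family of comparison functions $q=\alpha+\beta x+\gamma x^2$, which cannot be chosen to vanish at four prescribed nodes; and nothing forces the fourth zero of $|x|^p-q$ to land in the right place. So Part (b) cannot be carried out as written, even in this symmetric example. (Your deduction that $f(0)\le 1/\sqrt2$ is fine: if $f(0)>g(0)$ then $u=\log(g/f)$ is negative at $0$, whence the combined pattern has at most two sign changes, contradicting the three forced by the moment constraints.)

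This is precisely the obstruction the paper's ``$L_1$ proxy'' is designed to remove. The paper first proves the $L_p$--$L_1$ inequality (Theorem \ref{thm:Lp-L1}) and deduces Theorem \ref{thm:Lp-L2} by applying it twice, $\|X\|_p\ge \Gamma(p+1)^{1/p}\|X\|_1\ge \Gamma(p+1)^{1/p}C_2^{-1}\|X\|_2$. The key point is that fixing $\E|X|$ together with $\E X=0$ fixes \emph{both one-sided first moments} $\int_0^{+\infty}xf$ and $\int_{-\infty}^0(-x)f$ separately, i.e.\ it gives two constraints on \emph{each} half-line. Lemma \ref{lm:reduction} then matches $f$ to a two-sided exponential $g_{a,b}$ with the same $\p{X>0}$ and $\E|X|$, so that $f-g_{a,b}$ has \emph{exactly two} sign changes on each half-line, and the crossing argument closes with an affine comparison $\lambda x+\mu$. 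Your $g$ is the Laplace rather than the matched $g_{a,b}$; it is too rigid a target, and the variance constraint is the wrong currency. (Also a minor point: the phase transition the paper establishes is at $p_0\approx 2.94$ for $L_p$--$L_1$, and is conjectured at $p^*\approx 1.68$ for $L_p$--$L_2$; it is not caused simply by $|x|^p$ turning convex at $p=1$, though the hypothesis $p\le 1$ is indeed used in the crossing step.)
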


Here and throughout, we use the standard notation of $L_p$-norms\footnote{abusing the terminology slightly as these are not norms anymore when $p < 1$}:  for a random variable $X$ and $p \in \R$, 
\[
\|X\|_p = (\E |X|^p)^{1/p} \ \in [0,+\infty],
\]
with the usual convention of adopting the limiting expressions as definitions at $p = 0$ and $p = +\infty$, respectively as $\|X\|_0 = e^{\E\log|X|}$ (the geometric mean) and $\|X\|_\infty = \text{ess sup} |X|$ (the essential supremum). For log-concave random variables $X$, $\|X\|_p$ is finite for all $p \in (-1, +\infty)$ as a result of an exponential decay of their densities (see, e.g. Lemma 2.2.1 in \cite{BGVV}).

In particular, for any log-concave random variable $X$ with mean $0$ {\red and} variance $1$, and for every $-1 < p < 0$, Theorem \ref{thm:Lp-L2} yields
\[ 
\E|X|^p \leq 2^{-p/2}\Gamma(p+1) = 2^{-p/2}\frac{\Gamma(p+2)}{p+1}, 
 \]
so after taking the limit,
\[ 
f_X(0) = \lim_{p \searrow -1} \frac{1+p}{2}\E|X|^p \leq 2^{-1/2},
 \]
which, when specialized to sums of exponentials $X = \sum a_j\cE_j$ is Webb's result, see \eqref{eq:Webb} and \eqref{eq:Webb-lim}.

Theorem \ref{thm:Lp-L2} is in fact obtained as a corollary to  a sharp $L_p - L_1$ moment comparison inequality, where in lieu of the \emph{variance constraint}, we impose the \emph{$L_1$ constraint} (which goes hand in hand with the mean $0$ constraint, but more on that later). Intriguingly, there is a phase transition of the extremising distribution. Specifically, for $p \geq 1$, we define
\begin{equation}\label{eq:def-Cp}
C_p = \max\left\{\Gamma(p+1)^{1/p}, \ \frac{e}{2}\|\cE-1\|_p  \right\},
\end{equation}
where $\cE$ is a standard exponential random variable. In fact (see Lemma \ref{lm:Cp} in the appendix),
\begin{equation}\label{eq:Cp}
C_p =\begin{cases}\Gamma(p+1)^{1/p}, \ &1 \leq p \leq p_0, \\ 
\frac{e}{2}\|\cE-1\|_p, \ &p \geq p_0,
\end{cases}
\end{equation}
where $p_0 = 2.9414..$ is the unique solution to the equation $\Gamma(p+1)^{1/p} = \frac{e}{2}\|\cE-1\|_p$ in $(1, +\infty)$. Our main result reads as follows.

\begin{theorem}\label{thm:Lp-L1}
For every log-concave random variable $X$ with mean $0$ and every $-1 < p \leq 1$, we have
\begin{equation}\label{eq:Lp-L1-p<1}
\|X\|_p \geq \Gamma(p+1)^{1/p}\|X\|_1,
 \end{equation}
whilst for every $p \geq 1$, we have
\begin{equation}\label{eq:Lp-L1-p>1}
\|X\|_p \leq C_p\|X\|_1,
\end{equation}
with the constant $C_p$ from \eqref{eq:Cp}.
Both bounds are sharp with equalities attained at either double-exponential or one-sided exponential random variables.
\end{theorem}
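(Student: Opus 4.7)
The plan is to reduce the extremal problem over the infinite-dimensional class of centred log-concave densities to a finite-parameter family via a localization lemma, and then carry out an explicit optimization to identify the two extremizers (symmetric double-exponential and centred one-sided exponential) and the phase transition at $p_0$. By homogeneity we may normalize $\|X\|_1 = 1$. The admissible set of log-concave probability densities $f$ on $\R$ with $\int x f(x) \dd x = 0$ and $\int |x| f(x) \dd x = 1$ is convex, and $f \mapsto \int |x|^p f(x) \dd x$ is linear in $f$, so its extrema are attained at extreme points of this set. A Fradelizi--Gu\'edon-type localization principle for log-concave densities subject to finitely many linear constraints then forces the extreme points to have $-\log f$ piecewise affine with only a few pieces; since the constraint $\int |x| f(x) \dd x = 1$ has a kink at the origin, one of the break points is pinned to $0$, and the extremizers reduce to the two-piece log-affine form
\[
f(x) = \begin{cases} A e^{-\alpha x}, & x \in [a, 0], \\ A e^{-\beta x}, & x \in [0, b], \end{cases}
\]
with $\alpha \leq \beta$ (log-concavity at $0$), $A > 0$, and $-\infty \leq a \leq 0 \leq b \leq +\infty$, subject to the three integral constraints, leaving only two free parameters.

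Two natural boundary members of this family are the symmetric double-exponential (Laplace) density, with $\alpha = -\beta = -1$, $a = -\infty$, $b = +\infty$, $A = 1/2$, and the centred one-sided exponential, with density $e^{-(x+1)}\1_{[-1, +\infty)}(x)$ (so $\alpha = \beta = 1$, $a = -1$, $b = +\infty$, $A = e^{-1}$). A direct calculation gives $\|X\|_p/\|X\|_1 = \Gamma(p+1)^{1/p}$ in the first case, and $\|X\|_1 = 2/e$ together with $\|X\|_p = \|\cE - 1\|_p$ in the second, hence $\|X\|_p / \|X\|_1 = \frac{e}{2}\|\cE - 1\|_p$ there. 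Both are centred log-concave and realize equality in \eqref{eq:Lp-L1-p<1} and \eqref{eq:Lp-L1-p>1} respectively. It then suffices to prove that within the two-parameter family these boundary candidates achieve the extremal values: for $-1 < p \leq 1$ the Laplace minimizes $\|X\|_p$, yielding \eqref{eq:Lp-L1-p<1}; while for $p \geq 1$ the maximum is attained by the Laplace when $p \leq p_0$ and by the one-sided exponential when $p \geq p_0$, yielding \eqref{eq:Lp-L1-p>1}.

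The explicit optimization is carried out by using the three integral constraints to eliminate three of the parameters (for instance $A$, $a$, $b$), leaving $\|X\|_p^p$ as an explicit function of the two remaining parameters, expressible in terms of incomplete Gamma functions. Critical-point analysis together with inspection of the parameter-space boundary should confirm that no interior critical point beats the two boundary candidates, and a monotonicity check identifies which of the two dominates for each $p$. The transition value $p_0$ is then pinned down as the unique solution of $\Gamma(p+1)^{1/p} = \frac{e}{2}\|\cE - 1\|_p$ in $(1, +\infty)$, consistent with \eqref{eq:Cp}.

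The main obstacle lies in this final optimization. The piecewise form of $f$ combined with the non-smoothness of $|x|^p$ at $0$ produces integrals involving incomplete Gamma functions that do not collapse to elementary expressions, and the phase transition at $p_0 \approx 2.94$ forces the argument to distinguish sub-ranges of $p$. The most delicate part will be rigorously ruling out interior critical points in favour of the two boundary extremizers, and verifying the correct monotonicity on the boundary in each regime.
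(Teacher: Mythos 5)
Your proposal takes the Fradelizi--Gu\'edon localization route, which the paper explicitly flags as the approach it wants to \emph{avoid}: the authors remark that the naive localization reduction produces ``an explicit but complicated optimisation problem with $4$ parameters (perhaps intractable, without any further nontrivial ideas).'' Their key idea is precisely to sidestep this. Because the mean-zero constraint $\int xf = 0$ together with the $L_1$ normalization $\int |x|f = 1$ forces $\int_0^{+\infty} xf = \int_{-\infty}^0(-x)f = \tfrac12$, one can reduce \emph{separately on each half-line}, using the convexity (or concavity) of $x\mapsto|x|^p$ on $(0,\infty)$ and $(-\infty,0)$. The mechanism is a direct crossing argument (Lemma \ref{lm:reduction}): the density $g$ of a suitably matched two-sided exponential $X_{a,b}$ intersects $f$ at most twice on each half-line (since $\log g$ is affine there and $\log f$ is concave), and together with the matched moment constraints this forces the integrand $\psi\cdot(g-f)$ to be pointwise nonnegative after subtracting an affine function vanishing at the two crossings. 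This lands directly on a \emph{one}-parameter family $\bar\cE_t$, $t\in[0,1]$, which Step II then handles via explicit low-moment calculations (Lemma \ref{lm:low-moments}) combined with further sign-pattern arguments (Lemmas \ref{lm:3crossings} and \ref{lm:power-functions}).

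Beyond the difference in approach, there are two genuine gaps in your sketch. First, the claim that localization forces a two-piece log-affine density with a break ``pinned to $0$'' is not justified. The constraint $\int |x|f = 1$ is linear in $f$; the kink of $|x|$ at the origin does not, in the Fradelizi--Gu\'edon framework, force a break of $-\log f$ at the origin. What determines the number and placement of pieces in the extremal $\phi = -\log f$ is the number of constraints and the convexity cone structure, not the smoothness of the constraint kernels. Running the localization honestly with three constraints leaves you with more degrees of freedom than you claim (and a family that, with its compact-support members and break at $0$, does not coincide with the paper's two-sided exponentials $X_{a,b}$, whose break sits at $b-a$, not $0$). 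Second, and more fundamentally, the ``explicit optimization'' over the remaining parameters is the entire content of the proof, and you acknowledge you do not carry it out. This is exactly where the paper does the real work: after reducing to the one-parameter family it locates the extremizer via the unimodality and monotonicity of $t\mapsto\|\bar\cE_t\|_q$ at $q=2,3,4$, then transfers to arbitrary $p$ using the three-crossing Lemma \ref{lm:3crossings} together with the Vandermonde-type sign-pattern Lemma \ref{lm:power-functions}, and finally handles the transition point $p_0$ separately (Lemmas \ref{lm:large-t} and \ref{lm:small-t}). Without an analogue of these steps, your argument establishes neither inequality.
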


Theorem \ref{thm:Lp-L2} follows at once. Indeed, using Theorem \ref{thm:Lp-L1} twice, for $-1 < p \leq 1$, we have
\[ 
\|X\|_p \geq \Gamma(p+1)^{1/p}\|X\|_1 \geq \Gamma(p+1)^{1/p}\frac{1}{C_2}\|X\|_2 =  2^{-1/2}\Gamma(p+1)^{1/p}\|X\|_2.
 \]
The same argument shows in fact the sharp $L_p - L_q$ moment comparison inequality in the range where the extremiser is double-exponential.

\begin{corollary}\label{cor:Lp-Lq}
For every log-concave random variable $X$ with mean $0$ and every $-1 < p \leq 1 \leq q \leq p_0$, we have
\begin{equation}\label{eq:Lp-Lq}
\|X\|_p \geq \frac{\Gamma(p+1)^{1/p}}{\Gamma(q+1)^{1/q}}\|X\|_q.
 \end{equation}
\end{corollary}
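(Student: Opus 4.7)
The plan is essentially what the authors have already sketched in deducing Theorem \ref{thm:Lp-L2}: apply Theorem \ref{thm:Lp-L1} twice, using its lower bound form to tie $\|X\|_p$ to $\|X\|_1$ from below and its upper bound form to tie $\|X\|_q$ to $\|X\|_1$ from above, then eliminate $\|X\|_1$ between them. Concretely, for $-1 < p \le 1$ the bound \eqref{eq:Lp-L1-p<1} reads
\[
\|X\|_p \geq \Gamma(p+1)^{1/p}\|X\|_1,
\]
and for $1 \le q \le p_0$ the bound \eqref{eq:Lp-L1-p>1} reads $\|X\|_q \le C_q \|X\|_1$.

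The hypothesis $q \le p_0$ enters only to invoke the first branch of \eqref{eq:Cp}, so that $C_q = \Gamma(q+1)^{1/q}$; rearranging this upper bound to $\|X\|_1 \ge \|X\|_q/\Gamma(q+1)^{1/q}$ and substituting into the first inequality then gives
\[
\|X\|_p \geq \Gamma(p+1)^{1/p}\|X\|_1 \geq \frac{\Gamma(p+1)^{1/p}}{\Gamma(q+1)^{1/q}}\|X\|_q,
\]
which is the claim. No new estimates are needed.

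I do not anticipate any real obstacle here: this is a direct corollary, and the only conceptual point worth underlining is the role of the threshold $p_0$ provided by Lemma \ref{lm:Cp}. On $[1,p_0]$ both extremisers in Theorem \ref{thm:Lp-L1} agree and are the standard double-exponential, which is exactly the distribution that saturates the chained inequality; for $q > p_0$ the upper-bound extremiser switches to a one-sided exponential, the two inequalities can no longer be saturated simultaneously by a single distribution, and this two-step chaining ceases to deliver the optimal constant. That is why the corollary is stated only up to $q = p_0$.
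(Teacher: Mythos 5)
Your proof is correct and is exactly the argument the paper uses: chain the lower bound \eqref{eq:Lp-L1-p<1} with the upper bound \eqref{eq:Lp-L1-p>1}, using $q\le p_0$ to read off $C_q=\Gamma(q+1)^{1/q}$ from \eqref{eq:Cp}. Your closing remark explaining why the chaining loses sharpness past $q=p_0$ is a helpful gloss but matches the paper's own comment that the argument applies ``in the range where the extremiser is double-exponential.''
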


\subsection{Related works.}
Webb's simplex slicing result has recently stimulated several geometric results: a stability result for Webb's inequality \eqref{eq:Webb} has been established in \cite{MTTT}, and an asymptotically sharp reversal (i.e. a lower bound on volume) in \cite{Tang}, proceeding several partial results from \cite{Brz, Dirk}.

Our Theorem \ref{thm:Lp-L2} brings the probabilistic picture for simplex slicing to the same level now that has been known for Ball's cube slicing since the work \cite{CKT} (interestingly, there is also a single phase transition, occurring already for negative moments). 

Finally, our main result, Theorem \ref{thm:Lp-L1}, falls into the realm of reverse H\"older-type inequalities, a.k.a. Khinchin-type inequalities. H\"older's inequality asserts that $\|X\|_p \leq \|X\|_q$  for an arbitrary random variable $X$ as long as $p \leq q$. Khinchin in his work on the law of the iterated logarithm \cite{Khin} established that for Rademacher sums $X$, such inequalities hold in reverse as well, up to a multiplicative constant depending only on $p$ and $q$. It has been of interest to find other classes of random variables for which the reversals hold and to determine best constants in such inequalities (we refer to the introduction in \cite{HT} for many further references, as well as to \cite{BMNO} for the latest developments).

Log-concave random variables constitute a prominent class enjoying Khinchin-type inequalities, with natural connections to convex geometry (marginals of uniform measures on convex bodies are log-concave), which can be traced back at least to the {\red seminal} paper \cite{MP} by Milman and Pajor; we also refer to Chapter 5 in the lecture notes \cite{GNT}. Even preceding this development (arising in a context of distributions with monotone hazard rate significant in statistics with applications to renewal processes), notable is the result of \cite{BMP} where \eqref{eq:Lp-Lq} was established for \emph{all} $-1 < p < q$ for random variables $X$ which are symmetric or nonnegative (a.s.) with log-concave tails, i.e. when $t \mapsto -\log\p{X > t}$ is convex (implied when $X$ is log-concave). For Khinchin-type inequalities like \eqref{eq:Lp-Lq} with \emph{sharp} constants, weakening the symmetry assumption however has proved challenging and has been investigated only recently (in the case of actual $L_p$ norms). Given $1 \leq p \leq q$, let $C_{p,q}^*$ be the best constant $C$ such that $\|X\|_q \leq C\|X\|_p$ holds for all log-concave random variables with mean $0$. Eitan in \cite{Eitan} (Theorem 3.1 therein) {\red has shown} that $C_{p,q}^* = \max_{0 \leq s \leq 1}\frac{\|Z_s\|_q}{\|Z_s\|_p}$, that is $C_{p,q}^*$ is attained within the one parameter family of two-sided exponential random variables $Z_s = s(\cE-1) - (1-s)(\cE'-1)$, $0 \leq s \leq 1$, $\cE, \cE'$ being i.i.d. standard exponential random variables. Eitan has also conjectured that when $p, q$ are even integers, the maximum is attained at $s = 0, 1$ (a one-sided exponential), and confirmed this for $p=2$, and arbitrary even $q$, as well as for all even integers $p \leq q \leq 100$. Nota bene, he has provided nice applications to geometry of convex bodies characterising simplices as bodies with \emph{heaviest tails}. Murawski in \cite{Mur} has established a concrete clean bound $C_{p,q}^* \leq \frac{p}{q}$. 

Our main result completely determines $C_{1,q}^*$ (as well as $C_{p,1}^*$ when $-1 < p \leq 1$), revealing a phase transition of the extremising distribution. 

It is also worth mentioning a standard application to maximal volume central sections of isotropic convex bodies: the limiting case $p \searrow -1$ of Theorem \ref{thm:Lp-L1} recovers Fradelizi's result \cite{Fra} (Corollary 1 therein), which is in the spirit of a main result of \cite{MTW} (Corollary 4 therein establishes a sharp lower bound on noncentral sections).

\section{Proofs}

\subsection{Overall strategy and the main idea: $L_1$ proxy}
The proof of Theorem \ref{thm:Lp-L1} will be done in two steps.

\emph{Step I:} A reduction to two-sided exponentials.

\emph{Step II:} An optimisation over the one parameter family of two-sided exponentials.

At a high level, Theorem \ref{thm:Lp-L2} amounts to solving the following optimisation problem
\begin{align*}
\inf\big/\sup \int_{\R} |x|^pf(x) \dd x &\quad \text{subject to}\\
 &\quad \int_{\R} f(x) \dd x = 1, \int_{\R} xf(x) \dd x = 0, \int_{\R} x^2 f(x) \dd x = 1,
 \end{align*}
where the $\inf\big/\sup$ (depending on the sign of $p$) is taken over all log-concave functions $f\colon \R \to [0,+\infty)$. The objective functional is in fact linear in $f$,
 so are the constraints. After a compactification of the domain, this naturally lands itself into a problem where the so-called localisation techniques developed
  in \cite{FG} may be applied. {\red That is (skipping many details), through the Krein-Milman theorem and the identification of extreme points as log-concave
   functions on a compact interval with at most $3$ degrees of freedom (in the sense of \cite{FG}), 
the above optimisation problem is reduced to the family of compactly supported log-concave functions $f = e^{-\phi}$ with convex potentials $\phi$
 which are piece-wise affine with at most two linear pieces, 
 leading to an explicit but complicated optimisation problem with $4$ parameters (perhaps intractable, without any further nontrivial ideas). See, e.g. \cite{BN, MNT, MNR, Mur} for further details, where this exact scheme has been employed.
 Depending on how one sets the parameters up for the reduced problem, this might amount to, say, given $a, b > 0$, 
 extremising $\int_{-a}^b |x|^pf(x) \dd x$ with $f(x) = c\exp(-\max\{\alpha x + \beta, \alpha' x + \beta'\})$ with the parameters $a, b, c, \alpha, \alpha', \beta, \beta'$
  constrained by $\int f = 1, \int xf(x) = 0$, $\int x^2f(x) = 1$, leaving out $4$ free parameters.}

To overcome such issues, our key idea is to introduce a \emph{proxy} constraint: instead of the $L_2$ constraint, we impose the $L_1$ constraint
\[ 
\int_{\R} |x|f(x) \dd x = 1.
 \]
Crucially, the mean $0$ constraint  $\int_\R xf(x)\dd x = 0$ is plainly equivalent to
\[ 
\int_0^{+\infty} xf(x) \dd x= \int_{-\infty}^0 (-x)f(x) \dd x,
 \]
thus the $L_1$ constraint in fact asserts that both integrals 
\[ 
\int_0^{+\infty} xf(x) \dd x, \quad \int_{-\infty}^0 (-x)f(x) \dd x
 \]
are fixed to be $\frac12$. Together with the convexity/concavity of $x \mapsto |x|^p$ on $(-\infty, 0)$ and $(0, +\infty)$, this is then leveraged in Step I by a simple optimisation argument on each of the half-lines $(-\infty, 0)$ and $(0, +\infty)$ (without the need of compactification), reducing the whole problem to densities $f = e^{-\phi}$ with piece-wise linear potentials $\phi$ having only two slopes, resulting in an explicit {\red optimisation} problem over only $1$ parameter, then tackled in Step II. 

\subsection{Crossing arguments.}
The workhorse of the main reduction in Step I is a simple but quite powerful method of proving integral inequalities by analysing the number of sign changes of the integrand. It will also be instrumental for many tweaks in Step II to keep painstaking technicalities to a minimum. We shall illustrate this technique now with a toy example (needed later anyway). 
Let $0<x_1<x_2<\dots<x_k$. We say that function $f\colon (0,+\infty) \to \R$ has \emph{sign changes at $x_1,x_2,\dots,x_k$ with sign pattern $(+,-,\cdots)$ (resp. $(-,+,\cdots)$)} if $f$ is positive (resp. negative) on $(0,x_1)\cup (x_2,x_3)\cup \cdots$ and $f$ is negative (resp. positive) on $(x_1,x_2)\cup(x_3,x_4)\cup\cdots$. Similarly, we say that $f$ has \emph{weak sign changes at $x_1,x_2,\dots,x_k$ with sign pattern $(0+,0-,\cdots)$ (resp. $(0-,0+,\cdots)$)} if $f$ is nonnegative (resp. nonpositive) on $(0,x_1)\cup (x_2,x_3)\cup \cdots$ and $f$ is nonpositive (resp. nonnegative) on $(x_1,x_2)\cup(x_3,x_4)\cup\cdots$. If there exist $0<x_1<x_2<\dots<x_k$ such that $f$ has (weak) sign changes at $x_1,x_2,\dots,x_k$, we say that $f$ \emph{admits a finite (weak) sign change pattern}.

\begin{remark}
 It is not true that for every smooth function $f\colon (0,+\infty) \to \R$ and every finite interval $[a,b] \subset (0,+\infty)$, if $f$ takes both positive and negative values on $[a,b]$ then $f\big|_{[a,b]}$ admits a finite weak sign change pattern. However, it becomes true if ``smooth'' is replaced with ``analytic''  or ``convex''.
\end{remark}

\begin{lemma}\label{lm:simple-crossings}
Let $h \colon (0, +\infty) \to \R$ be an integrable function. Suppose $\int_0^{+\infty} h(x) \dd x = 0$. If  $h$ has exactly one weak sign change with sign pattern $(0-,0+)$, and if $h$ is nonzero on a set of positive measure, then
\[ 
\int_0^{+\infty} h(x)x \dd x > 0
 \]
(provided the integrability of $h(x)x$). 
\end{lemma}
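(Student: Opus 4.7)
The plan is to exploit the integral constraint $\int_0^{+\infty} h = 0$ to rewrite the first moment of $h$ as an integral whose integrand has a single consistent sign. By the hypothesis, there exists $x_1 > 0$ such that $h \leq 0$ a.e. on $(0, x_1)$ and $h \geq 0$ a.e. on $(x_1, +\infty)$. The key algebraic observation is that
\[
\int_0^{+\infty} h(x)\,x\,\dd x \;=\; \int_0^{+\infty} h(x)(x - x_1)\,\dd x \;+\; x_1\int_0^{+\infty} h(x)\,\dd x \;=\; \int_0^{+\infty} h(x)(x - x_1)\,\dd x,
\]
because the second term vanishes by the mean-zero assumption. The factor $(x - x_1)$ has exactly the same sign change at $x_1$, with the opposite sign pattern to $h$, so the product $h(x)(x - x_1)$ is nonnegative a.e. on all of $(0, +\infty)$.

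This immediately gives the nonstrict inequality $\int_0^{+\infty} h(x)\,x\,\dd x \geq 0$. For strictness, I would argue as follows. Since $h$ is nonzero on a set of positive measure and $\int_0^{+\infty} h = 0$, the function $h$ cannot be a.e. of one sign; hence $h < 0$ on a subset $A \subset (0, x_1)$ of positive measure and $h > 0$ on a subset $B \subset (x_1, +\infty)$ of positive measure. On $A$ (away from the single point $x_1$) we have $h(x)(x - x_1) > 0$, and similarly on $B$, so the integrand is strictly positive on a set of positive measure, yielding $\int_0^{+\infty} h(x)(x - x_1)\,\dd x > 0$.

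There is no real obstacle here: the only subtlety is keeping track of the strict versus weak inequalities, which is handled by the observation that $\int h = 0$ together with $h \not\equiv 0$ forces both the negative and the positive parts of $h$ to have positive mass. The same trick, pivoting about the sign-change location and using the mean-zero constraint to kill a linear term, is the template for the more elaborate crossing arguments to come in Step I.
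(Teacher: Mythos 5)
Your proof is correct and takes essentially the same approach as the paper: you pivot the linear factor about the sign-change point $x_1$ (the paper calls it $x_0$ and writes the shift as a free parameter $\alpha$ before choosing $\alpha = x_0$) so that the integrand $h(x)(x-x_1)$ becomes pointwise nonnegative, then deduce strict positivity from the fact that $h$ is not a.e.\ zero. Your more detailed strictness step --- that $\int h = 0$ together with $h\not\equiv 0$ forces both $h^+$ and $h^-$ to have positive mass, landing on opposite sides of $x_1$ --- is precisely the observation the paper leaves implicit in its one-line remark.
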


\begin{proof}
By the constraint that $h$ integrates to $0$, we have for \emph{every} $\alpha$,
\[ 
\int_0^{+\infty} h(x)x \dd x =  \int_0^{+\infty} h(x)(x-\alpha) \dd x.
 \]
If $h$ changes sign exactly once at say $x_0$, then choosing $\alpha = x_0$, the integrand becomes nonnegative on $(0,x_0)\cup(x_0,+\infty)$. Since it is not true that $h$ is zero almost everywhere, the integral is strictly positive, as desired.
\end{proof}

This concept appears naturally in moment problems, was advanced in \cite{ENT2}, and has been extensively used in a variety of contexts, see \cite{BNZ, BN, CET, Eitan, HNT, mars, melbourne2023discrete, melbourne2023transport, MTY}. Robust tools of this sort can be traced back to the classical work \cite{KN} of Karlin and Novikoff (with a slightly different focus on characterising certain cones of functions and their duals).

\subsection{A reduction to two-sided exponentials.}

Let $\cE, \cE'$ be i.i.d. standard exponential random variables. For two parameters $a, b \geq 0$, let
\begin{equation}\label{eq:def-Xab}
X_{a,b} = a(\cE-1) - b(\cE'-1).
 \end{equation}
Note that the density $g_{a,b}$ of $X_{a,b}$ reads
\begin{equation}\label{eq:gab}
g_{a,b}(x) = \begin{cases}\frac{1}{a+b}e^{-(x+a-b)/a}, & x \geq -(a-b), \\ \frac{1}{a+b}e^{(x+a-b)/b}, & x < -(a-b) \end{cases}
\end{equation}
(with the convention that for $a= 0$, or $b = 0$, the first, or the second case is empty, respectively, where we put $g_{a,b} \equiv 0$). In words, the distribution of $X_{a,b}$ is a mixture of two one-sided exponentials: $a\cE-a+b$ with probability $\frac{a}{a+b}$ and $-b\cE'-a+b$ with probability $\frac{b}{a+b}$. 

The main reductions are done with the aid of the following key lemma leveraging convexity.

\begin{lemma}\label{lm:reduction}
Let $X$ be a log-concave random variable with mean $0$. There are unique $a, b \geq 0$ such that 
\begin{equation}\label{eq:constraints}
\p{X > 0} = \p{X_{a,b} > 0} \qquad \text{and} \qquad \E|X| = \E|X_{a,b}|,
\end{equation}
where $X_{a,b}$ is defined in \eqref{eq:def-Xab}.
Moreover, for a function $\psi\colon \R \to \R$, convex on $(0, + \infty)$, zero outside, we have
\begin{equation}\label{eq:sup-two-slopes}
\E\left[ \psi\left(\frac{X}{\E|X|}\right)\right] \leq  \E\left[ \psi\left(\frac{X_{a,b}}{\E|X_{a,b}|}\right)\right].
\end{equation}
The same inequality holds if $\psi$ is convex on $(-\infty, 0)$ and zero outside. 
\end{lemma}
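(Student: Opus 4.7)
Reparametrise by $s = a/(a+b) \in [0,1]$ and $c = a+b > 0$, so $X_{a,b} = cZ_s$ with $Z_s := s(\cE-1)-(1-s)(\cE'-1)$. Then $\p{X_{a,b}>0} = \p{Z_s>0}$ depends only on $s$, and $\E|X_{a,b}| = c\E|Z_s|$ scales linearly in $c$. Splitting the integral of \eqref{eq:gab} at the mode $1-2s$ yields an explicit formula for $\p{Z_s > 0}$, continuous and strictly decreasing in $s$, taking values over $[e^{-1}, 1-e^{-1}]$. Combined with Gr\"unbaum's inequality, which confines $\p{X>0}$ to the same interval for any log-concave $X$ with mean zero (the extremes attained at one-sided exponentials), this yields a unique $s$ matching $\p{X > 0}$; the scale $c = \E|X|/\E|Z_s|$ is then forced (the case $X\equiv0$ being vacuous).

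\textbf{Part II (the crossings setup).} For the inequality, homogeneity allows one to normalise $\E|X|=\E|X_{a,b}|=1$. Write $f=f_X|_{(0,\infty)}$, $g=g_{a,b}|_{(0,\infty)}$ and $h:=f-g$; the constraints \eqref{eq:constraints} with $\E X = \E X_{a,b} = 0$ translate on the positive half-line to $\int_0^\infty h = 0$ and $\int_0^\infty xh = 0$. The plan is to show that if $h\not\equiv0$, then $h$ has exactly two sign changes on $(0,\infty)$ with pattern $(-,+,-)$. Granted this, the two-crossings lemma concludes: at the sign-change points $0<x_1<x_2$, let $L$ be the affine interpolant of $\psi$ at those points; convexity of $\psi$ gives $\psi-L\leq 0$ on $[x_1,x_2]$ and $\psi-L\geq 0$ outside, so $(\psi-L)h\leq 0$ pointwise, while $\int Lh=0$ by the two moment constraints, yielding $\int_0^\infty \psi h \leq 0$, i.e., \eqref{eq:sup-two-slopes}.

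\textbf{Part III (the sign-pattern count, the main obstacle).} Fewer than two sign changes are ruled out: were $x_0$ the lone crossing, $(x-x_0)h$ would have fixed sign, contradicting $\int(x-x_0)h=0$. For the upper bound, examine $u:=\log f-\log g$. When $a\geq b$ (so the mode $b-a$ of $g_{a,b}$ lies in $(-\infty,0]$), $g|_{(0,\infty)}$ is a pure exponential, $\log g$ is affine, and $u$ is concave on $(0,\infty)$; concavity caps the zeros of $u$ at two and forces $u$ to be negative outside them, fixing the pattern $(-,+,-)$. When $a<b$, $g|_{(0,\infty)}$ is tent-shaped with mode at $b-a>0$, and $u$ is concave on each of $(0,b-a)$ and $(b-a,\infty)$ with a convex corner at the mode: this is the crux. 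The key leverage is a Fradelizi-type bound $\tilde h(0)\leq 1/\mu$ for a log-concave density $\tilde h$ on a half-line with mean $\mu$; applied on $(-\infty,0)$ to the conditional density of $X$ given $X<0$ and combined with continuity of $f$ at $0$, it forces $f(0^+)\leq g(0^+)$, so $h(0^+)\leq 0$. A parallel tail comparison --- arguing that any further decrease in the asymptotic log-slope of $f$ relative to that of $g$ would be incompatible with the Fradelizi bound and the matched first moment --- gives $h\leq 0$ at infinity. These boundary pinchings, combined with the piecewise-concave structure of $u$, cap the zeros of $u$ at two and force the pattern $(-,+,-)$. Finally, the companion inequality (for $\psi$ convex on $(-\infty,0)$ and zero outside) follows by applying the statement just proven to $-X$, whose matched parameters are $(b,a)$, together with the convex function $x\mapsto\psi(-x)$.
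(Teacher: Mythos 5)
Your Parts I and II are sound, and your treatment of the case $a \geq b$ in Part III (where $\log g$ is affine on all of $(0,\infty)$) coincides with the paper's argument, though the paper only needs this case after a WLOG reduction. The critical divergence, and the gap, is in your handling of $a<b$.

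For $a<b$, the function $u=\log f-\log g$ is concave on each of $(0,b-a)$ and $(b-a,\infty)$ but has a \emph{convex} corner at $b-a$, so it can have up to four zeros. Your two boundary conditions --- $h(0^+)\le 0$ via the Fradelizi-type bound, and $h<0$ near $+\infty$ via a tail comparison --- do not cap the zeros at two. The configuration where $u$ goes $(-,+,-)$ on $(0,b-a)$ and again $(-,+,-)$ on $(b-a,\infty)$, with $u(b-a)<0$ and $u'_{-}(b-a)<0<u'_{+}(b-a)$, is entirely consistent with both boundary pinchings, the piecewise concavity, and the convex corner. That is a sign pattern $(-,+,-,+,-)$ with four crossings, and your two-crossing argument in Part II does not apply. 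So the claim that ``these boundary pinchings \dots cap the zeros of $u$ at two'' is simply not justified by what you've established.

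What the paper does, and what your argument is missing, is to exploit the interaction between the two half-lines rather than treat $(0,\infty)$ in isolation. After the WLOG reduction $a\ge b$ (replace $X$ by $-X$; the hypotheses are symmetric), one observes that $\log g$ is linear on the whole ray $[m,+\infty)$ with $m=-(a-b)\le 0$, so $\log g-\log f$ is convex there and admits at most two zeros. The moment constraints on $[0,\infty)$ force at least two sign changes on $[0,\infty)$, hence both zeros already lie in $[0,\infty)$, and by convexity $g-f\ge 0$ on the intermediate segment $[m,0]$. It is precisely this inherited positivity on $[m,0]$ that kills the extra pair of potential crossings when one turns to $(-\infty,0]$, where the tent corner lives. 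In your framing ($a<b$ on $(0,\infty)$), the analogous cure would be to first run the crossing argument on $(-\infty,0]$, deduce $g-f\ge 0$ on $[0,b-a]$, and only then examine $[b-a,\infty)$; but you never use the negative half-line's constraints for this purpose.

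Two secondary concerns: the ``Fradelizi-type bound $\tilde h(0)\le 1/\mu$'' is asserted without proof or reference, and (remarkably) the computation showing $2\,\p{X<0}^2 = g(0^+)$ under the normalisation $\E|X_{a,b}|=1$ --- which is exactly what makes your deduction $f(0^+)\le g(0^+)$ work --- is elided entirely; the tail comparison is too vague to check. But even with these filled in, the four-crossing scenario above is not excluded, so the gap remains.
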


\begin{proof}
Let $\alpha = \p{X > 0}$. Without loss of generality, we can assume that $\alpha \leq \frac12$ (otherwise, we consider $-X$ instead of $X$). Since $\E X = 0$, by Gr\"unbaum's theorem (\cite[Lemma~2.2.6]{BGVV}), in fact, $\alpha \in [\frac1e, \frac12]$. 
Suppose that $a \geq b$. Then,
\[
\p{X_{a,b} > 0} = \int_0^{+\infty} g_{a,b} = \frac{1}{a+b}\int_0^{+\infty} e^{-(x+a-b)/a} \dd x = \frac{1}{e}\frac{e^{b/a}}{1+b/a}.
\]
Since the function $u \mapsto \frac{1}{e}\frac{e^u}{1+u}$ is continuous and strictly increasing on $[0,{+\infty})$, it is bijective between $[0,1]$ and $[\frac1e, \frac12]$. As a result, there is a unique value of the ratio $b/a$ for which $\p{X_{a,b} > 0} = \alpha$. Furthermore, the mean of $X_{a,b}$ is $0$, so
\[
\frac12\E|X_{a,b}| = \int_0^{+\infty} xg_{a,b}(x) \dd x = \frac{1}{a+b}\int_0^{+\infty} xe^{-(x+a-b)/a} \dd x = a\p{X_{a,b} > 0}.
\]
Therefore, taking $a = \frac{\E|X|}{2\p{X >0}}$ and $b \leq a$ determined by the value of the ratio $b/a$ prescribed above, we obtain \eqref{eq:constraints}.

\begin{figure}
    \centering
\begin{tikzpicture}[scale=1.3]

\draw[->, line width=1.2pt] (-3,0) -- (3,0);
\draw[->, line width=1.2pt] (0,-1) -- (0,3);

\draw[line width=1pt] 
  (-2,-1) -- (-1,2.7) -- (2.5,-1);
\node at (2.8,-1) {$\log g$};

\draw (-1,-0.1) -- (-1,0.1);
\node at (-1,-0.3) {$m$};

\draw[dotted] 
  plot[smooth] coordinates {
  (-1.9,-1) (-1.8,-0.2) (-1.7,0.5) (-1.6,1.1)
  (-1.5,1.6) (-1.4,2.0) (-1.3,2.3) (-1.2,2.37)
  (-1.1,2.36) (-1.0,2.34) (-0.5,2) (0.8,0.9)
  (1,0.7) (1.2,0.45) (1.3,0.3) (1.4,0.1)
  (1.7,-0.4) (2,-1)
  };
\node at (1.6,-1) {$\log f$};

\fill (-1.82,-0.33) circle (0.04);
\fill (-1.09,2.36) circle (0.04);
\fill (0.3,1.32) circle (0.04);
\fill (1.33,0.23) circle (0.04);

\end{tikzpicture}
    \caption{The intersection pattern of $\log f$ and $\log g$.}
    \label{fig:f-g}
\end{figure}
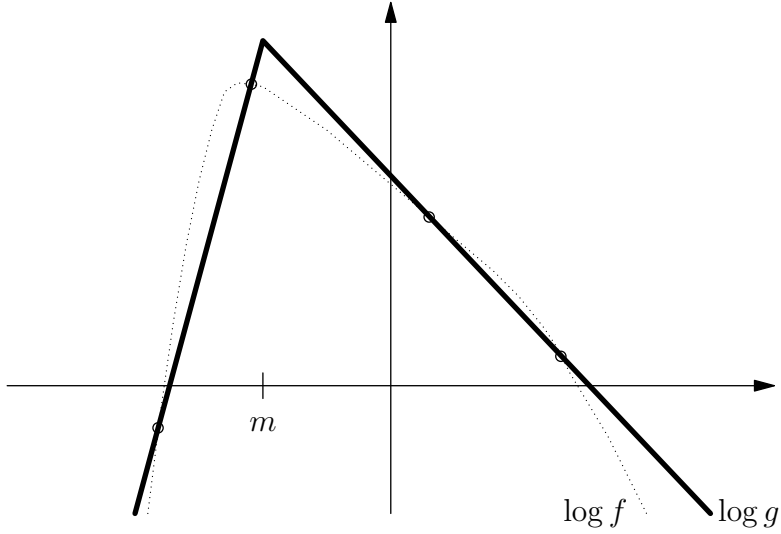

We proceed {\red to show} \eqref{eq:sup-two-slopes}. By the homogeneity of $X/\E|X|$, we can assume that $\E|X| = 1$. Let $f\colon \R \to [0,{+\infty})$ denote the density of $X$. For brevity, we shall write $g = g_{a,b}$ for the density of $X_{a,b}$ given above. These are log-concave probability densities, by construction $\log g$ is piecewise linear with maximum at $m = -(a-b) \leq 0$, enjoying the constraints
\[
\int_0^{+\infty} f = \int_0^{+\infty} g, \qquad \int_0^{+\infty} xf(x) \dd x = \int_0^{+\infty} xg(x) \dd x,
\]
thus also,
\[
\int_{-\infty}^0 f = \int_{-\infty}^0 g, \qquad \int_{-\infty}^0 xf(x) \dd x = \int_{-\infty}^0 xg(x) \dd x
\]
(since $\int f = \int g = 1$ and $\int xf(x) = \int xg(x) = 0$). See Figure \ref{fig:f-g}.

Note that $f$ and $g$ on $[m,{+\infty})$ can intersect \emph{at most} twice, that is $g-f$ changes sign at most twice because $\log f$ is concave and $\log g$ is linear. Similarly on $(-\infty, m]$. Because of the integral constraints on $[0, {+\infty})$, Lemma \ref{lm:simple-crossings} asserts that $g-f$ needs to change its sign \emph{at least} twice on $[0, {+\infty})$, say at $0 \leq x_1 < x_2$, unless $f = g$ (identically). Then, $g - f$ has the sign pattern $(+,-,+)$ on $(0, {+\infty})$. Incidentally (it will be needed in the sequel), we also remark that $g - f \geq 0$ on $[m,0]$ as $\log g$ is linear on the entire half-line $[m,{+\infty})$. We take $\lambda, \mu \in \R$ such that $\psi(x) - \lambda x - \mu$ vanishes at $x_1, x_2$ ($\lambda, \mu$ are thus uniquely determined by the corresponding linear equations). Since $\psi$ is convex, $\psi(x) - \lambda x - \mu$ enjoys the sign pattern $(0+,0-,0+)$. Then,
\begin{align*}
\E \psi(X_{a,b}) - \E\psi(X) &=   \int_0^{+\infty} \psi\cdot (g - f) \\
&= \int_0^{+\infty} \big(\psi(x) - \lambda x - \mu\big)\cdot\big(g(x) - f(x)) \dd x \geq 0,
\end{align*}
as the last integrand is pointwise nonnegative. It remains to address the case when $\psi$ is defined on $(-\infty, 0)$, that is to show that
\[
\int_{-\infty}^0 \psi\cdot (g - f) \geq 0. 
\]
As before, by the integral constraints, $f$ and $g$ must intersect \emph{at least} twice on $(-\infty, 0]$. Since $g - f \geq 0$ on $[m,0]$ and $\log g$ is linear whilst $\log f$ is concave on $(-\infty, m]$, they must intersect \emph{at most} twice on $(-\infty, m]$, thus on $(-\infty,0]$ (since $g-f \geq 0$ on $[m,0]$). Repeating the previous argument verbatim finishes the proof.
\end{proof}

For $0 \leq t \leq 1$, we consider a family 
\[
\cE_t = X_{1,t} = \cE-1 - t(\cE'-1)
\] 
of two-sided exponentials of mean $0$. Letting
\[ 
\mu_t = \| \cE_t  \|_1 = \frac{2}{e}\frac{e^t}{1+t},
 \]
we rescale $\cE_t$ to define a family with the $L_1$ norm fixed to be $1$,
\[ 
\bar\cE_t = \frac{1}{\mu_t}\cE_t.
 \]
Note that $\mu_0 = \frac2e$ and $\mu_1 = 1$ and $t \mapsto \mu_t$ is continuous strictly increasing on $[0,1]$.

Lemma \ref{lm:reduction} readily reduces the proofs of the main inequalities \eqref{eq:Lp-L1-p<1} and \eqref{eq:Lp-L1-p>1} to the family $\{\bar\cE_t\}_{t\in [0,1]}$. Indeed, suppose first that $-1 < p < 0$. Applying \eqref{eq:sup-two-slopes} twice to $\psi(x) = x^p$ on $(0, {+\infty})$ and $\psi(x) = (-x)^p$ on $(-\infty, 0)$, we get
\[ 
\sup \frac{\E|X|^p}{(\E|X|)^p} = \sup_{a, b \geq 0} \frac{\E|X_{a,b}|^p}{(\E|X_{a,b}|)^p}
 \]
with the supremum on the left taken over all mean $0$ log-concave random variables $X$. By the homogeneity and symmetry of the ratio $\frac{\E|X_{a,b}|^p}{(\E|X_{a,b}|)^p}$, we can take $0 \leq b \leq a = 1$ which results in
\[ 
\sup \frac{\E|X|^p}{(\E|X|)^p} = \sup_{0 \leq t \leq 1} \E|\bar\cE_t|^p,
 \]
or, taking the $1/p$ power (which is negative),
\[ 
\inf \frac{\|X\|_p}{\|X\|_1} = \inf_{0 \leq t \leq 1} \|\bar\cE_t\|_p.
 \]
Arguing the same way in the case $0 < p \leq 1$ (using the concavity of $|x|^p$ on both $(0,{+\infty})$ and $(-\infty, 0)$), we conclude that \eqref{eq:Lp-L1-p<1} follows once we prove that
\begin{equation}\label{eq:E_t-p<1}
\inf_{0 \leq t \leq 1} \|\bar\cE_t\|_p = \|\bar\cE_1\|_p =  \Gamma(p+1)^{1/p}, \qquad -1 < p \leq 1.
\end{equation} 
By the same token, \eqref{eq:Lp-L1-p>1} follows once we prove that
\begin{equation}\label{eq:E_t-p>1}
\sup_{0 \leq t \leq 1} \|\bar\cE_t\|_p =  \max\left\{\|\bar\cE_1\|_p, \|\bar\cE_0\|_p\right\} = C_p, \qquad p \geq 1.
\end{equation}

{\red Conceptually, one might therefore think of Lemma \ref{lm:reduction} as a localisation step, in that it reduces the optimisation problem at hand over \emph{all} (constrained) log-concave densities to \emph{simple} concrete densities (two-sided exponentials). As discussed earlier, the off-hand  localisation method of Fradelizi and Guedon from \cite{FG} does not seem to be a feasible approach as the resulting problem involves too many parameters to handle. Finally, as another point of comparison, Eitan's main argument from \cite{Eitan} leads to the same reduction (Theorem 3.1 specialised to $p=1$ therein). His approach also relies on crossing arguments, but via direct comparison, viz. bounding the number of zeros of polynomial-type functions (Chebyshev's systems). Our application of Lemma~\ref{lm:reduction} on the other hand strongly relies on the convexity of $|x|^p$ as well as the interplay between the mean zero assumption and the $L_1$ constraint, which we have strived to make explicit by phrasing Lemma \ref{lm:reduction} in a more robust setting concerning the optimisation of integrals against convex functionals $\psi$.}

{\red It is also worth comparing Lemma \ref{lm:reduction} to Fradelizi's results from \cite{Fra}, in particular Theorem 8, and the upper bound of (6), therein, which states that among all log-concave probability densities $f$ of mean $0$, with a \emph{fixed} value at $0$, the symmetric exponential density maximises the integral against any \emph{even} convex function. We can recover Fradelizi's upper bound using Lemma \ref{lm:reduction} (nota bene, the assumption that $\phi$ is even is not needed). We state this as the following corollary to Lemma \ref{lm:reduction}, and sketch the argument.

\begin{corollary}[Upper bound (6) in \cite{Fra}]\label{cor:Fra}
Let $\phi\colon \R\to \R$ be a convex function. For every log-concave probability density function $f\colon \R \to [0,+\infty)$ with mean $0$, that is $\int_{-\infty}^\infty xf(x) \dd x = 0$, we have
\[ 
\int_{-\infty}^\infty \phi(x)f(x) \dd x \leq \int_{-\infty}^\infty \phi(x)\cdot \Big(f(0)e^{-2f(0)|x|}\Big)\dd x.
 \]
\end{corollary}
\begin{proof}[Sketch]
We apply \eqref{eq:sup-two-slopes} twice, to $\psi(x) = \phi(\lambda x)$, $\lambda = \int |x|f(x)$, restricted to $(0, + \infty)$ and $(-\infty, 0)$, and obtain $\int \phi\cdot g \geq \int \phi\cdot f$, where $g = g_{a,b}$ is the two-sided exponential density \eqref{eq:gab}, matching the quantities $\int_0^\infty f$ and $\int_0^\infty tf(t) \dd t$, (instead of $f(0)$). We then consider $\tilde g(x) = g(0)\exp\{-2g(0)|x|\}$ and argue that $\tilde g - g$ has sign pattern $(+,-,+)$. Since $\phi$ is convex, and both densities $g$ and $\tilde g$ have mean $0$, this gives $\int \phi\cdot \tilde g \geq \int \phi \cdot g$. Now we consider $\tilde f(x) = f(0)\exp\{-2f(0)|x|\}$. Crucially, it can be showed that $g(0) \geq f(0)$ (assuming without loss of generality, $a \geq b > 0$, the calculations in the proof of Lemma \ref{lm:reduction} give $g(0) = \frac{1}{a}\frac{1}{e}\frac{e^{b/a}}{1+b/a} = \frac{(\int_0^\infty f)^2}{\int_0^\infty xf(x)}$ which is at least $f(0)$, e.g. by Lemma 2 in \cite{W}). Thus, equivalently, $\tilde f - \tilde g$ has sign patter $(+,-,+)$, and, as a result, $\int \phi\cdot \tilde f \geq \int \phi\cdot \tilde g$, leading to $\int \phi\cdot \tilde f  \geq  \int \phi\cdot f$, as desired.
\end{proof}
}

Note that $\bar\cE_1 =\cE_1 = \cE -\cE'$ has the standard symmetric double-sided exponential density $\frac12e^{-|x|}$, consequently $|\bar\cE_1|$ has the same distribution as $\cE$, so $\|\bar\cE_1\|_p = \Gamma(p+1)^{1/p}$, whereas $\bar\cE_0 = \frac{1}{\mu_0}(\cE-1) = \frac{2}{e}(\cE-1)$, hence the right-most equalities in \eqref{eq:E_t-p<1} and \eqref{eq:E_t-p>1} are as desired.

Before we proceed with proofs of \eqref{eq:E_t-p<1} and \eqref{eq:E_t-p>1}, we establish necessary supplementary results.

\subsection{Auxiliary lemmata}

Throughout the rest of this paper, we follow the notation introduced in the previous subsection, that is $\cE, \cE'$ are i.i.d. $\text{Exp}(1)$, $\cE_t = \cE-1 - t(\cE'-1)$, $\mu_t = \E|\cE_t|$ and $\bar\cE_t = \mu_t^{-1}\cE_t$, for $t \in [0,1]$.

\begin{lemma}\label{lm:3crossings}
Let $f_t\colon[0,{+\infty}) \to [0,{+\infty})$ be the density of $|\bar\cE_t|$. For every $0<t<1$, each of the functions $f_t - f_0$ and $f_1 - f_t$ changes sign exactly $3$ times on $(0, {+\infty})$ and has sign pattern $(+,-,+,-)$.
\end{lemma}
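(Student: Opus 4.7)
The plan is to combine a moment-based lower bound of $3$ sign changes with an upper bound of $3$ sign changes (via monotonicity/convexity on some pieces and Descartes' rule for exponential sums on others), then read off the sign pattern from the signs at $y = 0$ and $y = +\infty$.

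Lower bound (common to both cases). Since $\bar\cE_s$ and $\bar\cE_t$ are nonnegative of $L^1$-norm $1$, both $f_s - f_t$ and $y(f_s - f_t)$ integrate to $0$ on $(0,+\infty)$. If $f_s - f_t$ had only a single sign change at some $y_0$, then $(y - y_0)(f_s - f_t)$ would be a.e. of one sign but of zero integral (the crossing-arguments trick of Lemma \ref{lm:simple-crossings}), forcing $f_s = f_t$ a.e. Direct evaluation using the ordering $\mu_0 < \mu_t < 1 = \mu_1$ shows $f_s - f_t > 0$ near $0$ and $f_s - f_t < 0$ for large $y$ (in the latter, the slower-decaying exponential tail wins). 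So the number of sign changes is odd, and at least $3$.

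Upper bound for $f_1 - f_t$. Split $(0,+\infty)$ at $x_t^* := (1-t)/\mu_t$, where the formula for $f_t$ changes form. On $(0, x_t^*]$ we have $f_t(y) = \mu_t^2 \cosh(\mu_t y)$, so $(f_1 - f_t)'(y) = -e^{-y} - \mu_t^3 \sinh(\mu_t y) < 0$; hence $f_1 - f_t$ is strictly decreasing and has at most one zero. On $(x_t^*,+\infty)$, $f_t(y) = \tfrac{\mu_t^2}{2} e^{-\mu_t y} + C_t e^{-\mu_t y/t}$ with $C_t > 0$, and $f_1 - f_t$ is a combination of three exponentials with rates $-\mu_t/t < -1 < -\mu_t$ and coefficients (in that order) $(-C_t,\, 1,\, -\mu_t^2/2)$; the sign pattern $(-,+,-)$ has two sign changes, so by Descartes' rule for exponential sums at most two zeros. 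Totalling at most $3$.

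Upper bound for $f_t - f_0$. The principal subtlety is that $f_0$ has a jump discontinuity of size $2/e$ at $y_0 := 1/\mu_0 = e/2$, arising from the left endpoint $-e/2$ of the support of $\bar\cE_0 = \tfrac{e}{2}(\cE - 1)$. I partition $(0,+\infty)$ into $(0, x_t^*]$, $(x_t^*, y_0)$, and $(y_0, +\infty)$, after verifying $x_t^* < y_0$ for $t \in (0,1)$ (equivalent to the elementary inequality $1 - t^2 < e^t$). On $(0, x_t^*]$ both densities have the cosh-form; since $\mu \mapsto \mu^4 \cosh(\mu y)$ is strictly increasing in $\mu > 0$, $(f_t - f_0)''(y) = \mu_t^4 \cosh(\mu_t y) - \mu_0^4 \cosh(\mu_0 y) > 0$, and combined with $(f_t - f_0)'(0) = 0$ and $(f_t - f_0)(0) = \mu_t^2 - \mu_0^2 > 0$ this gives $f_t - f_0 > 0$ throughout (no sign change). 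On $(x_t^*, y_0)$, $f_t - f_0$ is a combination of four exponentials with rates $-\mu_t/t < -\mu_t < -\mu_0 < \mu_0$ and coefficient signs $(+, +, -, -)$; only one sign change, so at most one zero. On $(y_0,+\infty)$, three exponentials with rates $-\mu_t/t < -\mu_t < -\mu_0$ and coefficient signs $(+, +, -)$ give at most one zero. Including the potential sign flip at the jump $y_0$, at most $3$ sign changes.

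In both cases the matching bounds force exactly $3$ sign changes, and the endpoint signs then dictate the pattern $(+,-,+,-)$. The main obstacle is the analysis of $f_t - f_0$: one must correctly treat the jump in $f_0$ at $y_0$ as contributing one of the three sign changes, arranging the Descartes-rule bookkeeping across the three subintervals so the upper bound of $3$ is achieved exactly against the moment-based lower bound.
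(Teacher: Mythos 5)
Your argument is correct and takes a genuinely different route from the paper's, though the overall skeleton is the same: a lower bound of three sign changes coming from the two integral constraints $\int (f_s - f_t) = \int y(f_s - f_t) = 0$ together with the endpoint signs, and an upper bound of three coming from splitting $(0,+\infty)$ at exactly the same break points $x_t^* = (1-t)/\mu_t$ and $y_0 = 1/\mu_0 = e/2$. The difference lies in the mechanism used on each subinterval. The paper bounds the zeros by studying the ratio $r = f_t/f_s$ and exploiting that $\log f_t - \log f_s$ is piecewise convex or linear (so $r$ is piecewise convex, or monotone where one factor is increasing and the other decreasing). You instead invoke Descartes' rule for sums of exponentials --- the Chebyshev-system fact that $\sum c_i e^{\lambda_i y}$ has at most as many real zeros as sign changes in $(c_i)$ when the rates are sorted --- uniformly on the pieces where both $f_t$ and $f_s$ are pure exponential combinations. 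Your version is arguably more systematic (one tool for all pieces, no case split on whether $r(x_t^*) \lessgtr 1$), at the cost of leaning on a standard but not entirely elementary fact that should be stated or cited (it follows, e.g., from the strict total positivity of $(e^{\lambda_i x_j})$, cf. P\'olya--Szeg\H o, Part V, or Karlin's \emph{Total Positivity}; the paper itself uses the same determinant fact in Lemma~\ref{lm:power-functions}). Two small points worth tightening: in the lower-bound paragraph you write generically ``$f_s - f_t > 0$ near $0$,'' but this sign only holds for the two specific differences in the statement, namely $f_t - f_0$ and $f_1 - f_t$ (it would be reversed for $f_0 - f_t$); and when summing the per-piece bounds for $f_t - f_0$ you should note explicitly that $f_t - f_0$ is continuous at $x_t^*$ (and strictly positive there), so no extra sign change can hide at that seam, whereas the jump of $f_0$ at $y_0$ does contribute at most one additional sign change.
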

\begin{proof}
In the notation of \eqref{eq:def-Xab}, we have $\cE_t = X_{1,t}$, so recalling \eqref{eq:gab}, $\cE_t$ has density $g_{1,t}$, thus $|\cE_t|$ has density on $(0,{+\infty})$ at $x > 0$ equal to
\begin{align*}
\rho_t(x) = g_{1,t}(x) + g_{1,t}(-x) &= \frac{1}{1+t}e^{-(x+1-t)} +  \frac{1}{1+t}\begin{cases}
e^{x-(1-t)}, & x \leq 1-t, \\ e^{-x/t+(1-t)/t}, & x > 1-t,
\end{cases}\\
&=\frac{e^{t-1}}{1+t}\begin{cases}
e^{x}+e^{-x}, & x \leq 1-t, \\ e^{-x} + e^{-x/t + 1/t-t}, & x > 1-t.
 \end{cases}
 \end{align*}
When $t = 0$ (following the convention in \eqref{eq:gab} that $e^{-x/t+(1-t)/t}$ is replaced by $0$ identically on $(1,{+\infty})$),
\[ 
\rho_0(x) = e^{-1}\begin{cases}
e^{x}+e^{-x}, & x \leq 1, \\ e^{-x}, & x > 1.
 \end{cases}
 \]
Plainly 
\[ 
\rho_1(x) = e^{-x}.
 \]
 Note that for every $0 < t \leq 1$, $\rho_t$ is continuous, whereas $\rho_0$ has a jump at $x = 1$ and is continuous elsewhere.

The density $f_t$ of $|\bar\cE_t| = \mu_t^{-1}|\cE_t|$ is then $f_t(x) = \mu_t\rho_t(\mu_tx)$. 

Fix $0 < t < 1$. First let us handle the function 
\[
h_1(x) = f_1(x) - f_t(x) = \rho_1(x) - \mu_t\rho_t(\mu_tx).
\]
Since $\int_0^{+\infty} h_1 = 0$, and $h_1$ is not identically $0$, it must change sign at least once; additionally $\int_0^{+\infty} h_1(x)x \dd x = 0$, so $h_1$ need change sign at least twice on $(0, {+\infty})$, by Lemma \ref{lm:simple-crossings}.
It is evident that the ratio $r_1(x) = f_t(x)/f_1(x) = \mu_t\rho_t(\mu_t x)e^x$ is convex on each of the intervals $[0, \alpha_t]$ and $[\alpha_t, {+\infty})$, where $\alpha_t = \mu_t^{-1}(1-t)$ denotes the point where the formula defining $f_t(x)$ changes. Moreover, $r_1(x)$ is strictly increasing on $[0, \alpha_t]$ (as the product of two strictly increasing functions) with $r_1(0) = \mu_t\rho_t(0) = \mu_t^2 < 1$. It is also clear that $r_1(x) \to {+\infty}$ as $x \to {+\infty}$. If $r_1(\alpha_t) \leq 1$, then $r_1(x)=1$ would have no solutions on $(0,\alpha_t)$ and exactly $1$ solution on $(\alpha_t,{+\infty})$, resulting in $f_1 - f_t$ having at most one sign change, ruled out earlier. Therefore, $r_1(\alpha_t) > 1$. Consequently, $r_1(x)=1$ has exactly one solution on $(0,\alpha_t)$ (by monotonicity) and exactly $2$ solutions on $(\alpha_t,{+\infty})$ (by convexity), resulting in $f_1 - f_t$ having exactly three sign changes and the sign pattern $(+,-,+,-)$ (because $r_1(0) < 1$).

Now we handle the function
\[
h_0(x) = f_t(x) - f_0(x) = \mu_t\rho_t(\mu_tx) - \mu_0\rho_0(\mu_0x).
\]
As before, it necessarily has at least two sign changes.

We will frequently use that $\mu_t > \mu_0$. Recall $\rho_0(\mu_0x)$ has a jump discontinuity at $x_0 = \mu_0^{-1}$ and the formula for $\rho_t(\mu_tx)$ changes at $x = \alpha_t = \mu_t^{-1}(1-t) < \mu_0^{-1} = x_0$. It will be convenient to split the analysis into the consecutive intervals: $[0, \alpha_t]$, $(\alpha_t, x_0]$ and $(x_0,+ \infty)$.

On $[0, \alpha_t]$, we have $\frac12 h_0(x) = \mu_t\frac{e^{t-1}}{1+t}\cosh(\mu_tx) - \mu_0e^{-1}\cosh(\mu_0x)$. Since $\mu_t > \mu_0$ and $\frac{e^{t-1}}{1+t} > e^{-1}$, we have $h_0 > 0$ on $[0, \alpha_t]$.

On $[\alpha_t, x_0]$, $f_t$ is now strictly decreasing, whereas $f_0(x) = \mu_0^2\cosh(\mu_0 x)$ is still strictly increasing, so $h_0$ changes sign at most once on this interval.

On $(x_0, {+\infty})$, the ratio 
\[
r_0(x) = \frac{f_t(x)}{f_0(x)} = \frac{e\mu_t}{\mu_0}\rho_t(\mu_tx)e^{\mu_0x} = \frac{\mu_t}{\mu_0}\frac{e^t}{1+t}\left(e^{-(\mu_t-\mu_0)x} +e^{-(\mu_t/t-\mu_0)x + 1/t-t} \right)
\]
is clearly strictly decreasing with $r_0(x) \to 0$ as $x \to {+\infty}$. In particular, $h_0$ can have at most one sign change here, and is eventually negative. 

Function $h_0$ admits one more sign change, at the discontinuity point $x_0$, so at most $3$ sign changes on $(0, {+\infty})$, and minimum $2$ sign changes. As remarked, near $0$, $h_0$ is positive, and eventually negative, thus it {\red needs to} change sign an odd number of times. Thus we get exactly $3$ sign changes. Since $f_t(x) < f_0(x)$ as $x \to {+\infty}$, the sign pattern is $(+,-,+,-)$.

\end{proof}

\begin{lemma}\label{lm:Lp-explicit}
For $0 \leq t \leq 1$ and $p > -1$, we have
\[ 
\E|\cE_t|^p = \frac{e^{t-1}}{1+t}\left(\int_0^{1-t} x^{p}e^x \dd x + \Gamma(p+1)\right) +  \frac{t}{1+t}\E(t\cE + 1-t)^{p}.
 \]
\end{lemma}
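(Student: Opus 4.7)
The plan is to compute $\E|\cE_t|^p$ directly from the piecewise formula \eqref{eq:gab} for the density $g_{1,t}$ of $\cE_t = X_{1,t}$, by splitting the integral first at $x=0$ (to unfold the absolute value) and then splitting the negative-axis piece at the mode $x = -(1-t)$ where the formula for $g_{1,t}$ changes.

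First, since $1-t \geq 0$, the half-line $[0,+\infty)$ lies entirely in the regime $x \geq -(1-t)$, so on $[0,+\infty)$ the density equals $\frac{1}{1+t}e^{-(x+1-t)}$. Thus
\[
\int_0^{+\infty} x^p g_{1,t}(x)\,\dd x = \frac{e^{t-1}}{1+t}\int_0^{+\infty} x^p e^{-x}\,\dd x = \frac{e^{t-1}}{1+t}\Gamma(p+1),
\]
which accounts for the $\Gamma(p+1)$ term in the stated identity.

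Next, for the negative-axis contribution I would write
\[
\int_{-\infty}^0 (-x)^p g_{1,t}(x)\,\dd x = \int_{-(1-t)}^0 (-x)^p g_{1,t}(x)\,\dd x + \int_{-\infty}^{-(1-t)} (-x)^p g_{1,t}(x)\,\dd x.
\]
On the middle interval $[-(1-t),0]$ the density is again $\frac{1}{1+t}e^{-(x+1-t)}$; after the change of variable $y = -x$ this piece becomes $\frac{e^{t-1}}{1+t}\int_0^{1-t} y^p e^{y}\,\dd y$, contributing the remaining term in the first parenthesis. On the far-left interval $(-\infty, -(1-t)]$ the density equals $\frac{1}{1+t}e^{(x+1-t)/t}$; after the change of variable $u = -(x+1-t)/t$, so that $-x = tu + (1-t)$ and $\dd x = -t\,\dd u$, the integral turns into
\[
\frac{t}{1+t}\int_0^{+\infty} (tu + 1-t)^p e^{-u}\,\dd u = \frac{t}{1+t}\E(t\cE + 1-t)^p,
\]
yielding the final term.

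The computation is routine and there is no real obstacle; one only has to be careful about the convention at $t = 0$, where the formula for $g_{1,t}$ on $(-\infty, -(1-t))$ is declared empty in \eqref{eq:gab}, and the third term vanishes because of the factor $t/(1+t)$, in agreement with the direct evaluation $\E|\cE - 1|^p = e^{-1}(\int_0^1 y^p e^y\,\dd y + \Gamma(p+1))$. Summing the three pieces gives exactly the claimed identity.
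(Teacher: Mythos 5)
Your proof is correct and is essentially the same computation as the paper's, just phrased differently: you integrate $|x|^p$ directly against the piecewise density $g_{1,t}$ (splitting at $0$ and at the mode $-(1-t)$), whereas the paper invokes the mixture representation of $\cE - t\cE'$ as $\cE$ with probability $\frac{1}{1+t}$ and $-t\cE$ with probability $\frac{t}{1+t}$, then computes $\E|\cE-(1-t)|^p$ by splitting at $1-t$. Since the piecewise form of $g_{1,t}$ in \eqref{eq:gab} is exactly the density of that mixture, the two arguments produce the same three integrals, and both handle the $t=0$ boundary case correctly.
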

\begin{proof}
Recall $\cE_t = \cE-t\cE'-(1-t)$. Using that the distribution of $\cE-t\cE'$ is the same as a mixture of two exponentials: $\cE$ with probability $\frac{1}{1+t}$, and $-t\cE$ with probability $\frac{t}{1+t}$, we have
\[ 
\E|\cE_t|^{p} = \frac{1}{1+t}\E|\cE - (1-t)|^{p} + \frac{t}{1+t}\E(t\cE + 1-t)^{p},
 \]
and we calculate the first expectation,
\begin{align*}
\E|\cE - (1-t)|^{p} &= \int_0^{+\infty} |x - (1-t)|^{p} e^{-x} \dd x \\
&= \int_0^{1-t} (1-t-x)^{p}e^{-x}\dd x + \int_{1-t}^{+\infty} (x-(1-t))^{p} e^{-x}\dd x \\
&= e^{t-1}\left(\int_0^{1-t} x^{p}e^x \dd x + \Gamma(p+1)\right).
\end{align*}
\end{proof}

\begin{lemma}\label{lm:low-moments}
{\red The following properties hold:}
\begin{enumerate}[(i)]
\item\label{lm:low-mom-2}
The function $t \mapsto \|\bar\cE_t\|_2$ is strictly increasing on $[0,1]$.

\item\label{lm:low-mom-3}
The function $t \mapsto \|\bar\cE_t\|_3$ is unimodal on $[0,1]$: first strictly decreasing, then strictly increasing, with $\|\bar\cE_t\|_3 \leq  \|\bar\cE_1\|_3$ for all $\frac15 \leq t \leq 1$.

\item\label{lm:low-mom-4}
The function $t \mapsto \|\bar\cE_t\|_4$ is strictly decreasing on $[0,1]$.
\end{enumerate}
\end{lemma}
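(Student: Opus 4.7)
For all three parts the strategy is the same: write $\|\bar\cE_t\|_p^p = \E|\cE_t|^p/\mu_t^p$ as an explicit scalar function of $t\in[0,1]$ and analyze its derivative, exploiting $\mu_t = 2e^{t-1}/(1+t)$ and the identity $\mu_t'/\mu_t = t/(1+t)$.

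Parts (i) and (iii) are the fastest because $p = 2, 4$ are even, so $\E|\cE_t|^p = \E\cE_t^p$; a direct binomial expansion (or cumulant computation) using $\E(\cE-1)^k = 0, 1, 2, 9$ for $k = 1, 2, 3, 4$ gives $\E\cE_t^2 = 1+t^2$ and $\E\cE_t^4 = 9+6t^2+9t^4$. For (i), the logarithmic derivative of $\|\bar\cE_t\|_2^2 = (1+t^2)/\mu_t^2$ simplifies to $2t^2(1-t)/[(1+t)(1+t^2)]$, strictly positive on $(0,1)$. For (iii), the logarithmic derivative of $\|\bar\cE_t\|_4^4 = (9+6t^2+9t^4)/\mu_t^4$ simplifies to a rational expression whose numerator is $-2+t+t^2+3t^3-3t^4$; this factors cleanly as $-(1-t)^2(3t^2+3t+2)$ and is strictly negative on $(0,1)$.

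Part (ii) requires the odd-moment formula from Lemma \ref{lm:Lp-explicit} at $p = 3$. A direct but careful simplification --- integration by parts on $\int_0^{1-t} x^3 e^x \dd x$, binomial expansion of $\E(t\cE + 1-t)^3$, and noticing that $12e^{t-1}/(1+t) = 6\mu_t$ --- collapses the expression to the clean identity $\E|\cE_t|^3 = 2(t^3-1) + 6\mu_t$. Hence $F(t) \coloneq \|\bar\cE_t\|_3^3 = -2(1-t^3)/\mu_t^3 + 6/\mu_t^2$, and direct differentiation yields
\[
F'(t) = \frac{6t}{\mu_t^3(1+t)}\, g(t), \qquad g(t) \coloneq 1+t+t^2-t^3-2\mu_t.
\]
The required unimodality reduces to showing that $h(t) \coloneq (1+t)g(t) = 1+2t+2t^2-t^4-4e^{t-1}$ has exactly one sign change, from $-$ to $+$, on $[0,1]$. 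This I do by iterating down the derivatives of $h$: since $h'''(t) = -24t-4e^{t-1} < 0$ on $[0,1]$, $h''$ is strictly decreasing; combined with $h''(0) = 4 - 4/e > 0 > -12 = h''(1)$, this forces $h''$ to have a unique zero, so $h'$ is first strictly increasing then strictly decreasing; combined with $h'(0) = 2 - 4/e > 0 > -2 = h'(1)$, this forces $h'$ to have a unique zero, so $h$ is first strictly increasing then strictly decreasing; together with $h(0) = 1 - 4/e < 0$, $h(1) = 0$, and $h'(1) < 0$ (so $h$ approaches $0$ from above at $t = 1$), $h$ has a unique interior root $t_0 \in (0,1)$ with the required sign pattern.

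The remaining bound $\|\bar\cE_t\|_3 \leq 6^{1/3}$ for $t \in [1/5, 1]$ then follows from this unimodality together with $F(1) = 6$, provided we verify the single inequality $F(1/5) \leq 6$, equivalently $6\mu_{1/5}^3 - 6\mu_{1/5} + \tfrac{248}{125} \geq 0$ with $\mu_{1/5} = \tfrac{5}{3}e^{-4/5}$. I expect this numerical check to be the main technical obstacle: the margin is small (around $0.01$ in the cubic), so rather than brute-force numerics I would supply rigorous rational bounds on $e^{-4/5}$ via a short truncated (alternating) Taylor series, precise enough to certify the sign.
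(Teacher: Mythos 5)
Your proposal is correct and follows essentially the same route as the paper's proof: compute $\|\bar\cE_t\|_p^p$ explicitly using the even moments $\E(\cE-1)^k$ for $p=2,4$ and Lemma~\ref{lm:Lp-explicit} for $p=3$, differentiate (you use log-derivatives, the paper differentiates directly, but both hinge on $\mu_t'/\mu_t=t/(1+t)$ and arrive at the identical polynomial factorizations $2t^2(1-t)/[(1+t)(1+t^2)]$ and $-(1-t)^2(3t^2+3t+2)$), reduce part~(ii) to the sign pattern of the same auxiliary function $h(t)=1+2t+2t^2-t^4-4e^{t-1}$ via $h'''<0$ and the boundary values, and finish with the numerical check $F(1/5)<6$. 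The only presentational differences are that you descend one more derivative level (to $h''$) where the paper invokes concavity of $h'$ directly, and you flag (but do not carry out) a rigorous certification of the $t=1/5$ check that the paper simply states numerically as $5.97..<6$.
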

\begin{proof}[Proof of \eqref{lm:low-mom-2}]
We have,
\[ 
\|\bar\cE_t\|_2^2 = \mu_t^{-2}\Var(\cE_t) = \mu_t^{-2}(1+t^2) = \frac14e^{2(1-t)}(1+t^2)(1+t)^2.
 \]
Thus
\[ 
\frac{\dd}{\dd t}\|\bar\cE_t\|_2^2 = \frac12e^{2(1-t)}t^2(1-t^2) \geq 0,
 \]
with equality only at the endpoints $t = 0,1$, so the claim follows.

\emph{Proof of \eqref{lm:low-mom-3}.} We use Lemma \ref{lm:Lp-explicit},
\[ 
\E|\cE_t|^3 = 2\left(\frac{6e^{t-1}}{1+t} + t^3-1\right),
 \]
and after some calculations, we arrive at
\[ 
\frac{\dd}{\dd t}\|\bar\cE_t\|_3^3 = \frac34e^{3(1-t)}t(1+t)\Big[2t^2+2t+1-t^4 - 4e^{t-1}\Big].
 \]
Its sign is the same as the sign of the function in the square parentheses $h(t) = 2t^2+2t+1-t^4 - 4e^{t-1}$ which we now analyse. Since $h'''(t) = -24t-4e^{t-1} < 0$, $h'$ is concave. Moreover, $h'(0) = 2 - 4/e > 0$ and $h'(1) = -2 < 0$, so $h'$ is first positive then negative on $[0,1]$. Consequently, $h$ first increases then decreases. Since $h(0) = 1 - 4/e < 0$ and $h(1) = 0$, we conclude that $h$ is first negative, then positive on $(0,1)$ with exactly one zero. As a result $t \mapsto\E|\bar\cE_t|^3$ is first decreasing then increasing. Finally, we check that $\E|\bar\cE_{1/5}|^3 = 5.97.. < 6 = \E|\bar\cE_{1}|^3$ which finishes the proof.

\emph{Proof of \eqref{lm:low-mom-4}.} We have,
\[ 
\E|\cE_t|^4 = \E\Big((\cE-1)-t(\cE'-1)\Big)^4 = (1+t^4)\E(\cE-1)^4 + 6t^2\Big(\E(\cE-1)^2\Big)^2 = 9t^4 + 6t^2 + 9.
 \]
Then, using $\frac{\dd}{\dd t}\mu_t = \mu_t\frac{t}{1+t}$, we find that
\[ 
\frac{\dd}{\dd t}\|\bar\cE_t\|_4^4 =-12\frac{t(1-t)^2}{\mu_t^4(1+t)}\Big(3t^2+3t+2\Big)
 \]
which is clearly negative on $(0,1)$, hence the desired function is decreasing.
\end{proof}

As in the proof of Lemma \ref{lm:reduction}, we will repeatedly argue that certain integral inequalities hold by rearranging the integrand to show that it is pointwise nonnegative,
 analysing sign patterns of sums of certain power functions. We record the following elementary lemma, adapted to concrete functions stemming from our setting.  
 {\red Using an exponential change of variables, the following can be found as the $n=3$ case of \cite[Lemma A.2]{Fer}.}

\begin{lemma}\label{lm:power-functions}
Given distinct $p,q \in \R$ such that $q > 2$ and $0, 1, p, q$ are pairwise distinct, and positive numbers $0 < x_1 < x_2 < x_3$, there are unique $\alpha, \beta, \gamma \in \R$ such that the function
\[ 
x \mapsto x^p - (\alpha + \beta x + \gamma x^q)
 \]
changes sign on $(0,{+\infty})$ exactly at $x_1, x_2, x_3$. Moreover, it has the sign pattern
\begin{enumerate}[(i)]
\item\label{lm:power-functions1} $(+,-,+,-)$ when $p \in (-1,0) \cup (1, q)$, 
\item\label{lm:power-functions2} $(-,+,-,+)$ when $p \in (0,1) \cup (q, {+\infty})$.
\end{enumerate}
\end{lemma}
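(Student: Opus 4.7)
The three conditions $F(x_i) = 0$, $i=1,2,3$, for $F(x) := x^p - \alpha - \beta x - \gamma x^q$ form a linear system in $(\alpha,\beta,\gamma)$ whose coefficient matrix has $i$th row $(1, x_i, x_i^q)$. I would establish invertibility by a Rolle argument: a nonzero kernel element would produce $h(x) = v_1 + v_2 x + v_3 x^q$ with three positive zeros, whence $h'(x) = v_2 + v_3 q x^{q-1}$ would have two positive zeros; but $x \mapsto x^{q-1}$ is strictly monotone for $q > 1$, so $h'$ can have at most one positive zero unless $v_2 = v_3 = 0$, and then $v_1 = 0$ too. This yields existence and uniqueness of $(\alpha,\beta,\gamma)$.

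Next I would show that $F$ has \emph{exactly} three distinct positive zeros, each a genuine sign change. Since $p \notin \{0, 1\}$ and $q > 2$, the second derivative $F''(x) = p(p-1) x^{p-2} - \gamma q(q-1) x^{q-2}$ is a bona fide two-term generalized polynomial; setting $F''(x) = 0$ reduces to $x^{q-p} = \mathrm{const}$, hence has at most one positive solution. (Here $\gamma \neq 0$, since $\gamma = 0$ would cap the number of positive zeros of $F$ at two by one iteration of Rolle, contradicting the three prescribed zeros.) Iterated Rolle then gives that $F$ has at most three distinct positive zeros, which must be $x_1, x_2, x_3$; none can be a multiple zero, else $F'$ would acquire an extra zero forcing $F''$ to have two.

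For the sign pattern I would invoke the Polya--Laguerre generalization of Descartes' rule of signs: for any generalized polynomial $\sum_j c_j x^{a_j}$ with $a_1 < \cdots < a_n$, the number of positive zeros is bounded by the number of sign variations in $(c_j)$. Our $F$ has four terms with the coefficient of $x^p$ equal to $+1$, and three positive zeros; hence the coefficient sequence has exactly three sign variations, so its signs strictly alternate. The position of $+1$ among the exponent-sorted list then pins the alternation down, and the sign of the top coefficient determines the sign of $F(x)$ at $+\infty$, hence on $(x_3, +\infty)$; alternating back across $x_3, x_2, x_1$ yields the full pattern. Going through the four ranges of $p$ --- exponent orderings $(p,0,1,q)$, $(0,p,1,q)$, $(0,1,p,q)$, $(0,1,q,p)$ with $+1$ in positions $1, 2, 3, 4$ respectively --- produces the sign patterns $(+,-,+,-)$, $(-,+,-,+)$, $(+,-,+,-)$, $(-,+,-,+)$, exactly as claimed. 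The main obstacle is invoking the generalized Descartes rule; a self-contained alternative is to verify that the extreme-exponent coefficients are both nonzero (otherwise the Rolle count drops below three), then compute the asymptotic sign of $F$ at $0^+$ and $+\infty$ directly and alternate across the three zeros.
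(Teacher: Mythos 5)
Your proof is correct, but it takes a genuinely different (though closely related) route from the paper's. The paper handles both the unique solvability and the sign pattern with a single tool: the positivity of exponential Vandermonde determinants, $\det\big(\exp(a_ib_j)\big)>0$ for sorted $a_i$ and $b_j$ (cited as Problem 65 in Part Seven of P\'olya--Szeg\H o). Writing $x_j^{a} = e^{a\log x_j}$, the $3\times 3$ system matrix is exactly of this type, hence invertible, and the sign of $g(x')$ for $x'$ in any of the four intervals is read off by bordering the matrix with a fourth row at $x'$ and a fourth exponent column at $p$, expanding along the first column, and applying the same positivity theorem to the $4\times 4$ ordered-exponent determinant. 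You instead prove invertibility by iterated Rolle (showing $\{1,x,x^q\}$ is a Chebyshev system), cap the zero count of $F$ at three by applying Rolle again through $F''$, and then obtain the coefficient sign alternation from the P\'olya--Laguerre (generalized Descartes) rule, reading off the behavior at $0^+$ and $+\infty$ from the extreme coefficients. Both are sound; the paper's is more unified and purely determinantal, while yours separates existence/uniqueness (elementary Rolle) from the sign analysis (Descartes), which some readers may find more transparent. Under the hood the Vandermonde positivity and the generalized Descartes rule are two faces of the same total-positivity phenomenon for the exponential system, so the agreement is not a coincidence.

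One caveat on your closing remark: the proposed ``self-contained alternative'' of computing the asymptotic sign of $F$ at $0^+$ and $+\infty$ directly does not work in the middle cases $p\in(0,1)\cup(1,q)$. There the dominant term at $0^+$ is $-\alpha$ and at $+\infty$ is $-\gamma x^q$, whose signs are precisely what you are trying to determine, so you cannot simply read them off; you still need the alternation forced by Descartes (or the determinant computation). Your primary route via Descartes is fine, so this only affects the sketched variant, not the main argument.
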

\begin{proof}
It is a theorem that for any reals $a_1<a_2<\dots<a_n$ and $b_1<b_2<\dots<b_n$, the Vandermonde-type matrix $(\exp(a_ib_j))_{1\le i,j \le n}$ has positive determinant (see, e.g. Problem 65 in Part Seven of \cite{PS}).
Thus, the system of linear equations 
\[ 
\Big\{ \alpha + \beta x_j + \gamma x_j^q = x_j^p, \qquad j = 1, 2, 3,
 \]
has a unique solution for $\alpha, \beta, \gamma$ (as its matrix is Vandermonde-type and thus has nonzero determinant). This choice of the parameters $\alpha, \beta, \gamma$ results in the function
\[ 
g(x) = x^p - (\alpha + \beta x + \gamma x^q)
 \]
having zeros at $x_1, x_2, x_3$. 

Now let $x'\in(0,x_1)$ be arbitrary. By column operations, we compute \[\det\left[\begin{smallmatrix}
x'^p&1&x'&x'^q\\ x_1^p & 1& x_1 & x_1^q \\ x_2^p &1& x_2 & x_2^q\\ x_3^p &1& x_3 & x_3^q
\end{smallmatrix}\right] = \det\left[\begin{smallmatrix}
g(x')&1&x'&x'^q\\ g(x_1) & 1& x_1 & x_1^q \\ g(x_2) &1& x_2 & x_2^q\\ g(x_3) &1& x_3 & x_3^q
\end{smallmatrix}\right] = \det\left[\begin{smallmatrix}
g(x')&1&x'&x'^q\\ 0 & 1& x_1 & x_1^q \\ 0 &1& x_2 & x_2^q\\ 0 &1& x_3 & x_3^q
\end{smallmatrix}\right] = g(x') \det\left[\begin{smallmatrix}
  1& x_1 & x_1^q \\ 1& x_2 & x_2^q\\ 1& x_3 & x_3^q
\end{smallmatrix}\right].\] Using the theorem about Vandermonde-type determinants, we see \begin{align*}
    (0-p)(1-p)(q-p)\det\left[\begin{smallmatrix}
x'^p&1&x'&x'^q\\ x_1^p & 1& x_1 & x_1^q \\ x_2^p &1& x_2 & x_2^q\\ x_3^p &1& x_3 & x_3^q
\end{smallmatrix}\right] &> 0 \\
\det\left[\begin{smallmatrix}
  1& x_1 & x_1^q \\ 1& x_2 & x_2^q\\ 1& x_3 & x_3^q
\end{smallmatrix}\right] &> 0
\end{align*} and hence $g(x')$ is nonzero and has the same sign as $(0-p)(1-p)(q-p)$. This holds for any $x' \in (0,x_1)$. Similar reasoning in the cases when $x' \in (x_1,x_2)$ or $(x_2,x_3)$ or $(x_3,+\infty)$ yields the desired result.\end{proof}


\subsection{Proof of \eqref{eq:E_t-p<1}}
Fix $0 < t < 1$. First note that thanks to Lemma \ref{lm:low-moments} \eqref{lm:low-mom-2} and \eqref{lm:low-mom-4}, the function
\[ 
q \mapsto \E|\bar\cE_1|^q - \E|\bar\cE_t|^q
 \]
is positive at $q=2$ and negative at $q=4$, so by its continuity, there is $q=q(t) \in (2,4)$ where it vanishes. We fix such a value and call it $q$. As in Lemma \ref{lm:3crossings}, let $f_t$ be the density of $|\bar\cE_t|$. Fix nonzero $p \in (-1,1)$. Observe that for arbitrary $\alpha, \beta, \gamma \in \R$, we can write
\begin{align*}
\E(|\bar\cE_1|^p - |\bar\cE_t|^p) &= \int_0^{+\infty} \Big(f_1(x) - f_t(x)\Big)x^p \dd x\\
&= \int_0^{+\infty} \Big(f_1(x) - f_t(x)\Big)\cdot\Big(x^p  - (\alpha + \beta x + \gamma x^q)\Big)\dd x
 \end{align*}
as each of the integrals against $\alpha$, $\beta x$ and $\gamma x^q$ is zero. We know from Lemma \ref{lm:3crossings} that the first parenthesis has exactly $3$ sign changes on $(0,{+\infty})$, say at $x_1 < x_2 < x_3$, with sign pattern $(+,-,+,-)$. We choose $\alpha, \beta, \gamma$ from Lemma \ref{lm:power-functions} for these nodes. Moreover, from the lemma, the sign pattern of the second parenthesis is $(+,-,+,-)$ when $-1 < p < 0$, in which case the integrand is pointwise nonnegative and we get
\[ 
\E(|\bar\cE_1|^p - |\bar\cE_t|^p) \geq 0,
 \]
that is $\|\bar\cE_t\|_p \geq \|\bar\cE_1\|_p$ ($p$ is negative). Similarly, when $0 < p < 1$, the sign pattern is flipped resulting with the reverse inequality above and consequently again $\|\bar\cE_t\|_p \geq \|\bar\cE_1\|_p$, as desired. \qed

\subsection{Proof of \eqref{eq:E_t-p>1}}
The argument is broken into two parts. Recall from \eqref{eq:Cp} that $p_0$ is defined as the unique solution to the equation $\|\bar\cE_1\|_p = \|\bar\cE_0\|_p$ in $p$ on $(1,{+\infty})$ (see also Lemma \ref{lm:Cp} in the appendix).

\subsubsection{The whole range $p \geq 1$ reduces to $p = p_0$.}
Suppose \eqref{eq:E_t-p>1} holds for $p=p_0$. We shall show now that it then holds for all $p \geq 1$. 

\emph{Case 1: $1 \leq p \leq p_0$.} Fix $0 < t  < 1$. We proceed exactly as in the proof of \eqref{eq:E_t-p<1}. Recall that the function $q \mapsto \E|\bar\cE_1|^q - \E|\bar\cE_t|^q$ is negative at $q = 4$ and by the assumption made here, is nonnegative at $q = p_0$. We fix then a value of $q = q(t) \in [p_0, 4)$ where it vanishes and invoke Lemmas \ref{lm:3crossings} and \ref{lm:power-functions} \eqref{lm:power-functions1} to conclude that
\[ 
\E(|\bar\cE_1|^p - |\bar\cE_t|^p) = \int_0^{+\infty} \Big(f_1(x) - f_t(x)\Big)\cdot\Big(x^p  - (\alpha + \beta x + \gamma x^q)\Big)\dd x \geq 0
 \]
as in the proof of \eqref{eq:E_t-p<1}.

\emph{Case 2: $p \geq p_0$.} Quite similarly, by fixing $0 < t < 1$, consider $q \mapsto \E|\bar\cE_0|^q - \E|\bar\cE_t|^q$ which is negative at $q = 2$ (Lemma \ref{lm:low-moments} \eqref{lm:low-mom-2}) and nonnegative at $q = p_0$. Thus we can take $q = q(t) \in (2, p_0]$ where the function vanishes. Then, we write
\[ 
\E(|\bar\cE_0|^p - |\bar\cE_t|^p) = \int_0^{+\infty} \Big(f_0(x) - f_t(x)\Big)\cdot\Big(x^p  - (\alpha + \beta x + \gamma x^q)\Big)\dd x \geq 0,
 \]
where in view of Lemmas \ref{lm:3crossings} and \ref{lm:power-functions} \eqref{lm:power-functions2}, the parentheses have the same sign pattern $(-,+,-,+)$, thus $\E|\bar\cE_0|^p \geq\E |\bar\cE_t|^p$, as desired. \qed

\subsubsection{Proof of \eqref{eq:E_t-p>1} at transition point $p=p_0$}

We will use the explicit expressions for the ``low'' moments (2nd, 3rd and 4th) as well as several pointwise estimates. Recall that by the definition of $p_0$, $\|\bar\cE_0\|_{p_0} = \|\bar\cE_1\|_{p_0} = \Gamma(p_0+1)^{1/p_0}$ and that $p_0 = 2.9414..$ Our goal here is to show that 
\[ 
\E|\bar\cE_t|^{p_0} \leq \Gamma(p_0+1), \qquad 0 \leq t \leq 1.
 \]
First we show that this holds true for \emph{all} $t$ large enough, in fact for a range of $p$.

\begin{lemma}\label{lm:large-t}
For every $1 \leq p \leq 3$ and $\frac15 \leq t \leq 1$, we have
\[ 
\|\bar\cE_t\|_{p} \leq \|\bar\cE_1\|_p.
 \]
\end{lemma}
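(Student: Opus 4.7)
The plan is to mimic Case~1 of the proof of \eqref{eq:E_t-p>1}, but to use an interpolating power $q$ drawn from the interval $[3,4)$ rather than $[p_0,4)$. The refined low-moment information supplied by Lemma~\ref{lm:low-moments} is precisely what lets us anchor $q$ at $3$, and this anchoring is valid exactly when $t\geq 1/5$, matching the hypothesis of the lemma.

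First I would dispose of the trivial boundary cases: $p=1$ is immediate because $\|\bar\cE_t\|_1=1$ for every $t\in[0,1]$ by construction, and $t=1$ is vacuous. For the genuine range $1<p\leq 3$ and $\frac15 \leq t <1$, I would examine the continuous function $Q(q):=\E|\bar\cE_1|^q-\E|\bar\cE_t|^q$ of the real variable $q$. By Lemma~\ref{lm:low-moments}\eqref{lm:low-mom-3} one has $Q(3)>0$ strictly (see the last paragraph for the strictness check), while by Lemma~\ref{lm:low-moments}\eqref{lm:low-mom-4} one has $Q(4)<0$. The intermediate value theorem then produces $q=q(t)\in(3,4)$ with $Q(q)=0$. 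Since $p\leq 3<q$, the four numbers $0,1,p,q$ are pairwise distinct with $q>2$ and $p\in(1,q)$, so Lemma~\ref{lm:power-functions} is applicable.

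The conclusion is then assembled exactly as in Case~1 of \eqref{eq:E_t-p>1}. Using the three vanishing identities
\[
\int_0^{+\infty}(f_1-f_t)\,\dd x=0,\qquad \int_0^{+\infty} x(f_1-f_t)\,\dd x=0,\qquad \int_0^{+\infty} x^q(f_1-f_t)\,\dd x=0
\]
(the first two from the common normalisations $\|\bar\cE_t\|_1=1$ and $\E\bar\cE_t=0$, the third from the definition of $q$), I would rewrite
\[
\E|\bar\cE_1|^p-\E|\bar\cE_t|^p=\int_0^{+\infty}\bigl(f_1(x)-f_t(x)\bigr)\bigl(x^p-\alpha-\beta x-\gamma x^q\bigr)\,\dd x
\]
for suitable constants $\alpha,\beta,\gamma$. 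I would choose $\alpha,\beta,\gamma$ via Lemma~\ref{lm:power-functions} so that the second factor vanishes at the three sign-change points $x_1<x_2<x_3$ of $f_1-f_t$ supplied by Lemma~\ref{lm:3crossings} with sign pattern $(+,-,+,-)$. Because $p\in(1,q)$, part~\eqref{lm:power-functions1} of Lemma~\ref{lm:power-functions} confers the same sign pattern $(+,-,+,-)$ on $x^p-\alpha-\beta x-\gamma x^q$. The integrand is therefore pointwise nonnegative, the integral is $\geq 0$, and the inequality $\|\bar\cE_t\|_p\leq\|\bar\cE_1\|_p$ follows.

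The only non-routine point is verifying that $Q(3)>0$ strictly for all $t\in[1/5,1)$, so that the zero $q(t)$ really lies in $(3,4)$ and not merely in $[3,4)$. Lemma~\ref{lm:low-moments}\eqref{lm:low-mom-3} asserts unimodality of $t\mapsto\|\bar\cE_t\|_3$ together with the endpoint comparison $\|\bar\cE_{1/5}\|_3<\|\bar\cE_1\|_3$. On the increasing branch of this unimodal function, strict inequality $\|\bar\cE_t\|_3<\|\bar\cE_1\|_3$ for $t<1$ is immediate; on the preceding decreasing branch (if $1/5$ lies on it), monotonicity yields $\|\bar\cE_t\|_3\leq\|\bar\cE_{1/5}\|_3<\|\bar\cE_1\|_3$. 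Once this sanity check is in place, the proof reduces to a clean application of machinery already in place, with no further computation needed.
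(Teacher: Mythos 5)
Your proof is correct and follows essentially the same path as the paper: anchor the interpolating exponent at $q=3$ using the unimodality and endpoint comparison from Lemma~\ref{lm:low-moments}\eqref{lm:low-mom-3} together with the decrease from \eqref{lm:low-mom-4}, then run the three-node crossing argument of Lemmas~\ref{lm:3crossings} and \ref{lm:power-functions}\eqref{lm:power-functions1}. The paper's proof is just a terser version of yours — it fixes $\frac15<t<1$, observes $Q(3)>0$ and $Q(4)<0$, picks $q\in(3,4)$, and concludes — while you supply the sanity checks (boundary cases $p=1$, $t=1$; strictness of $Q(3)>0$) that the paper leaves implicit.
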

\begin{proof}
Fix $1 \leq p \leq 3$ and $\frac15 < t < 1$. Using Lemma \ref{lm:low-moments} \eqref{lm:low-mom-3} and \eqref{lm:low-mom-4}, the function
\[ 
q \mapsto \E|\bar\cE_1|^q - \E|\bar\cE_t|^q
 \]
is positive at $q = 3$ and negative at $q = 4$, so there is $3 < q < 4$ (depending on $t$) where it vanishes. 
It remains to write
\[ 
\E(|\bar\cE_1|^p - |\bar\cE_t|^p) = \int_0^{+\infty} \Big(f_1(x) - f_t(x)\Big)\cdot\Big(x^p  - (\alpha + \beta x + \gamma x^q)\Big)\dd x 
 \]
and analyse sign patterns using Lemmas \ref{lm:3crossings} and \ref{lm:power-functions} \eqref{lm:power-functions1} to conclude that the integrand is pointwise nonnegative.
\end{proof}

Now we finish the whole proof handling small $t$ with a bit of technical work.

\begin{lemma}\label{lm:small-t}
For every $0 \leq t \leq \frac15$, we have
\[ 
\|\bar\cE_t\|_{p_0} \leq \|\bar\cE_1\|_{p_0}.
 \]
\end{lemma}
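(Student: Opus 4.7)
The approach must depart from Lemma \ref{lm:large-t}, whose crossing argument is unavailable in this range. Indeed, Lemma \ref{lm:low-moments}\eqref{lm:low-mom-3} gives $\|\bar\cE_t\|_3 \geq \|\bar\cE_1\|_3$ for $t \in [0, 1/5]$, so no $q \in [3,4]$ equates $\E|\bar\cE_1|^q$ and $\E|\bar\cE_t|^q$; and switching the reference to $\bar\cE_0$ (natural since $\|\bar\cE_0\|_{p_0} = \|\bar\cE_1\|_{p_0}$ by the very definition of $p_0$) would demand the zero of $q\mapsto \E|\bar\cE_0|^q - \E|\bar\cE_t|^q$ to fall in $(2,p_0)$ so that Lemma \ref{lm:power-functions}\eqref{lm:power-functions2} delivers the sign pattern $(-,+,-,+)$ matching that of $f_0 - f_t$---but this is tantamount to the inequality we want. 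I would therefore analyze $F(t) := \|\bar\cE_t\|_{p_0}^{p_0} = \mu_t^{-p_0}\E|\cE_t|^{p_0}$ directly via Lemma \ref{lm:Lp-explicit}, and show $F(t) \leq F(0) = \Gamma(p_0+1)$ on $[0, 1/5]$ by a calculus argument rooted at $t = 0$.

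The first step is a perturbative analysis at $t=0$. From $\mu_t = \tfrac{2e^t}{e(1+t)}$ one reads off $\mu_0' = 0$, while the independence of $\cE'$ from $\cE$ combined with $\E(\cE'-1) = 0$ gives $\tfrac{d}{dt}\E|\cE_t|^{p_0}\big|_{t=0} = -p_0\E[|\cE-1|^{p_0-1}\sgn(\cE-1)]\cdot\E(\cE'-1) = 0$. Hence $F'(0) = 0$, and a short second-order computation yields
\[
F''(0) = p_0\,\mu_0^{-p_0}\bigl[(p_0-1)\E|\cE-1|^{p_0-2} - \E|\cE-1|^{p_0}\bigr].
\]
Both moments have the closed form $\E|\cE-1|^p = e^{-1}\bigl(\Gamma(p+1) + \int_0^1 u^p e^u\,du\bigr)$ from Lemma \ref{lm:Lp-explicit}; combined with the defining identity $\E|\cE-1|^{p_0} = (2/e)^{p_0}\Gamma(p_0+1)$ from Lemma \ref{lm:Cp} and narrow enclosures of $p_0 \approx 2.9414$, a rigorous numerical verification gives $F''(0) < 0$. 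Thus $F(t) < F(0)$ on some right-neighborhood $[0, t_*]$ of $0$.

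For the remaining compact range $[t_*, 1/5]$ I would show $F'(t) \leq 0$ directly. Using $\mu_t'/\mu_t = t/(1+t)$, this reduces to
\[
H(t) := \frac{t}{1+t}\,\E|\cE_t|^{p_0} + \E\bigl[|\cE_t|^{p_0-1}\sgn(\cE_t)(\cE'-1)\bigr] \geq 0,
\]
with $H(0) = 0$ and $H'(0) > 0$ (by the same moment inequality that gave $F''(0) < 0$). Expanding $\cE_t = \cE - 1 - t(\cE'-1)$ and resolving each expectation as an iterated integral against $e^{-\cE-\cE'}$, one obtains an elementary (if tedious) expression in $t$ and $p_0$ whose nonnegativity on the short interval $[0,1/5]$ is verifiable either by monotonicity in $t$ or by fine grid checks with derivative-controlled remainder. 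The main obstacle is making these explicit numerical estimates fully rigorous, given that $p_0$ is only implicitly defined: one would rely on tight interval enclosures of $p_0$ together with monotonicity of the relevant moments in $p$ to upgrade pointwise checks into uniform bounds. The threshold $1/5$ is presumably calibrated precisely so that these bounds close without further structural input.
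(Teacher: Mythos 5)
Your proposal takes a genuinely different route from the paper's, and while the overall strategy (a calculus argument rooted at $t=0$, where the inequality is tight) is sound, the proof as sketched is incomplete precisely where the paper has its main idea. The paper does not differentiate $F(t) = \|\bar\cE_t\|_{p_0}^{p_0}$ directly. Instead, it first invokes the power mean inequality (valid since $p_0 < 3$) to replace the intractable moment $\E(t\cE+1-t)^{p_0}$ in Lemma \ref{lm:Lp-explicit} by the polynomial bound
\[
\E(t\cE+1-t)^{p_0} \leq \left(\E(t\cE+1-t)^3\right)^{p_0/3} = (2t^3+3t^2+1)^{p_0/3} \leq 2t^3+3t^2+1,
\]
with equality at $t=0$. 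This reduces the whole claim to an explicit inequality in $t$ alone (no moments with exponent $p_0$ remain), which after one differentiation collapses to crude pointwise estimates closed by the single numerical fact $A_{p_0} = 2^{p_0}e^{-p_0}\Gamma(p_0+1)(p_0-1) \approx 4.39 > 4$. Your plan, by contrast, would have you compute $\E[|\cE_t|^{p_0-1}\sgn(\cE_t)(\cE'-1)]$ explicitly and certify its sign on $[t_*, 1/5]$ by ``fine grid checks with derivative-controlled remainder''---and you yourself flag that making this rigorous is the main obstacle. That obstacle is real and is exactly what the paper's power-mean reduction sidesteps; in that sense what you propose is not a proof but an outline whose hardest step is acknowledged but not executed. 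Your second-order computation at $t=0$ (giving $F'(0)=0$ and the displayed formula for $F''(0)$) is correct, and $F''(0)<0$ does hold, but this yields the conclusion only near $t=0$ and leaves the bulk of $[0,1/5]$ to the unverified grid argument.

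One small factual slip in the opening paragraph: Lemma \ref{lm:low-moments}\eqref{lm:low-mom-3} does \emph{not} give $\|\bar\cE_t\|_3 \geq \|\bar\cE_1\|_3$ on all of $[0,1/5]$. By unimodality and the computed values $\E|\bar\cE_0|^3 = \frac{e^2(6-e)}{4} \approx 6.06 > 6$ while $\E|\bar\cE_{1/5}|^3 \approx 5.97 < 6$, the sign of $\E|\bar\cE_t|^3 - \E|\bar\cE_1|^3$ changes somewhere inside $(0,1/5)$, so the crossing argument from Lemma \ref{lm:large-t} is in fact available on part of this range (just not all of it). This does not affect the validity of the replacement strategy you propose, but the stated motivation overclaims.
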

\begin{proof}
Plainly, the assertion is equivalent to
\[ 
\E|\cE_t|^{p_0} \leq \mu_t^{p_0}\Gamma(p_0+1), \qquad 0 \leq t \leq \frac15.
 \]
Invoking Lemma \ref{lm:Lp-explicit},
\[ 
\E|\cE_t|^{p_0} = \frac{e^{t-1}}{1+t}\left(\int_0^{1-t} x^{p_0}e^x \dd x + \Gamma(p_0+1)\right) +  \frac{t}{1+t}\E(t\cE + 1-t)^{p_0}.
 \]
Since $p_0 < 3$, we can estimate the expectation using the $3$rd moment as follows,
\[ 
\E(t\cE + 1-t)^{p_0} \leq \left(\E(t\cE + 1-t)^3\right)^{p_0/3} = (2t^3+3t^2+1)^{p_0/3} \leq 2t^3 + 3t^2+1.
 \]
Note that there is equality at $t = 0$. Putting these together, it suffices to show that
\[ 
\frac{e^{t-1}}{1+t}\left(\int_0^{1-t} x^{p_0}e^x \dd x + \Gamma(p_0+1)\right) + \frac{t}{1+t}(2t^3 +3t^2+1) \leq \mu_t^{p_0}\Gamma(p_0+1),
 \]
for $0 \leq t \leq \frac15$. Recall $\mu_t = \frac{2e^{t-1}}{1+t}$, so the last inequality is in turn equivalent to
\[ 
\int_0^{1-t} x^{p_0}e^x \dd x + \Gamma(p_0+1) + e^{1-t}t(2t^3 + 3t^2+1) \leq 2^{p_0}\Gamma(p_0+1)e^{(t-1)(p_0-1)}(1+t)^{1-p_0}.
 \]
Since we have equality at $t=0$, it suffices to show that the derivative of the difference between the right and left hand sides is nonnegative for $0 \leq t \leq \frac15$, that is
\[ 
-(1-t)^{p_0}e^{1-t} + e^{1-t}(5t^3+9t^2+1-2t^4-t) \leq 2^{p_0}\Gamma(p_0+1)(p_0-1)e^{(t-1)(p_0-1)}t(1+t)^{-p_0},
 \]
or, after diving both sides by $e^{1-t}$,
\[ 
-(1-t)^{p_0} + 5t^3+9t^2+1-2t^4-t \leq A_{p_0}te^{tp_0}(1+t)^{-p_0},
 \]
where we have set
\[
A_{p_0} = 2^{p_0}e^{-p_0}\Gamma(p_0+1)(p_0-1) = 4.39...
 \]
To show the last inequality, note that for $0 \leq t \leq \frac15$, crudely, 
\begin{align*}
-(1-t)^{p_0} + 5t^3+9t^2+1-2t^4-t  &\leq -(1-t)^{3} + 5t^3+9t^2+1-2t^4-t\\
&= 2t(3t^2+3t+1-t^3)\\
&\leq 2t(3t^2+3t+1) \leq 4t,
\end{align*}
whereas $e^{tp_0}(1+t)^{-p_0} \geq 1$, which finishes the proof.
\end{proof}

\section{Final remarks}

\subsection{Variance constraint.} 
{\red As has emerged} from our motivating discussion, it is perhaps most natural to consider extremising the $L_p$ norm under the $L_2$ constraint, and Theorem \ref{thm:Lp-L2} leaves open the following question of interest: given $1 < p < 2$ (resp. $p > 2$), what is
\[ 
\inf \text{    (resp.} \ \sup) \quad  \frac{\|X\|_p}{\|X\|_2}
 \]
over all mean $0$ log-concave random variables $X$? 

As pointed out earlier, Eitan's Theorem 3.1 from \cite{Eitan} reduces this to the one-parameter family of two-sided exponentials. Based on numerical experiments, we conjecture that the extremising distribution is the symmetric exponential for $1 < p < p^* \approx 1.68$ and one-sided exponential for $p > p^*$, so as for the $L_p-L_1$ problem, with the phase transition point $p^*$ {\red occurring} now in $(1,2)$. This is supported by the case of even $p$ which has been {\red shown} by Eitan (Theorem 1.4 in \cite{Eitan}). If we drop the \emph{mean $0$} constraint, insightful results have been recently obtained by Murawski in \cite{Mur}.

\subsection{Forward H\"older inequalities.}
It would also be of interest to determine sharp constants in usual H\"older inequalities for centred log-concave random variables $X$, that is, most generally, given $-1 < p < q$, what is $ \sup \frac{\|X\|_p}{\|X\|_q}$? 
{\red Although we do not have any compelling conjectures to offer, based on (rather limited) numerical evidence, we suspect that the extremiser is either the (one-sided) exponential or uniform distribution (with sharp phase transitions occuring for small values of $p$ and $q$, close to $-1$, determined by regions defined by nontrivial transcendental equations); in particular, when the competing effect of the unbounded support of the exponential distribution does not play any role, say for all $0 < p < q$, we suspect the extremiser to be simply the uniform distribution. This is to some extent heuristically supported by the fact that in the class of (even) unimodal distributions, for all $-1 < p < q$, the ratio $\frac{\|X\|_p}{\|X\|_q}$ is indeed maximised by the uniform distribution, see Remark 16 in \cite{ENT2}.}

\subsection{Sums of exponentials.}
Khinchin-type inequalities with sharp constants usually concern weighted sums of i.i.d. random variables with quite specific distributions (say Rademacher, Steinhauss, uniform, spherically symmetric, etc.). Our main results make a point that weighted sums of i.i.d. mean $0$ exponentials turn out to be amenable to the natural relaxation to the log-concave setting. In the context of the previous remark, it is perhaps compelling to ask for the sharp constants in  the forward H\"older inequalities for such sums. For instance, given $-1 < p < 2$ {\red (resp. $p > 2$)}, what is
\[ 
{\red \sup} \left\|\textstyle\sum_{j=1}^n a_j(\cE_j -1)\right\|_p \qquad {\red \text{(resp.} \ \inf)}
 \]
subject to $\sum a_j^2 = 1$, going towards a probabilistic extension of Tang's result from \cite{Tang} which gives a tight bound in the limit $p \searrow -1$?

{\red This question is studied in a forthcoming paper \cite{BTT}; we refer to it for precise conjectures and some (sharp) answers. In particular, for every $p > 2$, the infimum above is attained (asymptotically) by a Gaussian extremiser $a_1 = \dots = a_n = n^{-1/2}$, $n \to \infty$; however, in light of Tang's result, we suspect another sharp phase transition as $p$ gets close to $-1$.}

\appendix

\section*{Appendix}\label{appendix}

\begin{proof}[Proof of \eqref{eq:vol-formula}]
We let $f(x) = e^{-\sum_{j=1}^{n+1} x_j}\1_{(0,{+\infty})^{n+1}}(x)$ be the density of the random vector $(\cE_1, \dots, \cE_{n+1})$. For a unit vector $a$, the density of the marginal $\sum_{j=1}^{n+1} a_j\cE_j$ at $t \in \R$ is $\int_{a^\perp + ta} f(x) \dd\vol(x)$ (the integral understood with respect to the $n$-dimensional Hausdorff measure on the affine subspace $a^\perp + ta$). Thus
\[ 
f_{\sum a_j\cE_j}(0) = \int_{a^\perp} f(x) \text{dvol}_{a^\perp}(x).
 \]
On the other hand, using Fubini's theorem with respect to the parallel simplices $H_t = \{x \in \R_+^{n+1}, \ \sum x_j = t\}$, $t \geq 0$, on which $f(x)$ is constant equal to $e^{-t}$, we obtain
\begin{align*}
 \int_{a^\perp} f(x) \text{dvol}_{a^\perp}(x) &= \int_0^{+\infty} \int_{H_t \cap a^\perp} e^{-t}\text{dvol}_{a^\perp}(x) \\
&= \int_0^{+\infty} e^{-t}\vol_{n-1}(H_t \cap a^\perp) \frac{\dd t}{\sqrt{n+1}},
 \end{align*}
where the factor $\frac{1}{\sqrt{n+1}}$ results from the Jacobian. By the homogeneity of volume, $\vol_{n-1}(H_t \cap a^\perp)  = t^{n-1}\vol_{n-1}(H_1 \cap a^\perp)$. Noting that $H_1$ is the simplex $\Delta_n$ and evaluating $\int_0^{+\infty} e^{-t} t^{n-1} \dd t = \Gamma(n)$ finishes the proof.
\end{proof}

\begin{lemma}\label{lm:Cp}
Let $\cE$ be a standard exponential (mean $1$) random variable. The function 
\[ 
h(p) = \Gamma(p+1) - \left(\frac{e}{2}\right)^p\E|\cE-1|^p
 \]
has a unique zero $p = p_0 = 2.9414..$ in $(1, {+\infty})$. Moreover, $h(p) > 0$ on $(1, p_0)$ and $h(p) < 0$ on $(p_0,+ \infty)$.
\end{lemma}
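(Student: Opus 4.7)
My plan is to massage $h$ into a transparent form and then attack existence and uniqueness separately. Splitting $\int_0^\infty |x-1|^p e^{-x}\,\mathrm dx$ at $x=1$ (via the substitutions $y=1-x$ on $[0,1]$ and $y=x-1$ on $[1,\infty)$) gives
$$\E|\cE-1|^p = e^{-1}\bigl(\Gamma(p+1) + J(p)\bigr), \qquad J(p):=\int_0^1 y^p e^y\,\mathrm dy.$$
Integration by parts yields $J(1)=1$, so $\E|\cE-1|=2/e$ and $h(1)=0$ (a useful boundary value even though the claim concerns the open interval $(1,\infty)$). Setting $\phi(p):=J(p)/\Gamma(p+1)$ and taking a logarithm, the $\log\Gamma(p+1)$ terms cancel cleanly:
$$G(p):= \log\Gamma(p+1)-\log\E|\cE-1|^p - p\log(e/2) = 1 - p(1-\log 2) - \log\bigl(1+\phi(p)\bigr),$$
and $h$ and $G$ share the same sign on $(1,\infty)$.

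Existence and boundary behavior are routine from this formula. We have $\phi(1)=1$, hence $G(1)=0$; the crude bound $J(p)\le e/(p+1)$ combined with $\Gamma(p+1)\to\infty$ gives $\phi(p)\to 0$, whence $G(p)\sim -p(1-\log 2)\to -\infty$ and so $h(p)\to -\infty$. At $p=2$ one finds $J(2)=e-2$, so $1+\phi(2)=e/2$ and
$$G(2)= 1 - 2(1-\log 2) - (1-\log 2) = 3\log 2 - 2 > 0,$$
yielding $h(2)>0$; the intermediate value theorem then guarantees at least one zero of $G$ in $(1,\infty)$.

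Uniqueness is the main obstacle. From
$$G'(p) = -(1-\log 2) - \frac{\phi'(p)}{1+\phi(p)},$$
it suffices to show that $p\mapsto\log(1+\phi(p))$ is strictly convex on $[1,\infty)$, for then $G$ is strictly concave there; combined with $G(1)=0$, $G(2)>0$, and $G(p)\to-\infty$, strict concavity forces $G$ to rise to a unique interior maximum and then strictly descend, yielding a single zero $p_0\in(1,\infty)$ with the advertised sign pattern. Using the identity $1+\phi(p)= eM(p)/\Gamma(p+1)$ with $M(p):=\E|\cE-1|^p$, the required convexity reduces to the variance inequality $(\log M)''(p) \geq \psi_1(p+1)$, i.e.\ a comparison of the variance of $\log|\cE-1|$ under the tilted measure $|x-1|^p e^{-x}\,\mathrm dx/M(p)$ with the variance of $\log\cE$ under $\Gamma(p+1,1)$. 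I would first attempt a direct probabilistic comparison exploiting the symmetry of the $|x-1|^p$ weight; failing a slick closed-form proof, the decay $\phi(p)=O(\Gamma(p+2)^{-1})$ confines any critical point of $G$ to a compact interval, reducing uniqueness to a rigorous interval-arithmetic verification, in keeping with the numerical nature of the specific constant $p_0=2.9414\dots$ asserted in the statement.
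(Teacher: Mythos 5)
Your route is genuinely different from the paper's, and the reduction is appealing, but there is a real gap that the proposal itself flags. The entire weight of uniqueness (and of the sign pattern of $h$, which is really what the lemma asserts) is placed on the strict convexity of $p\mapsto\log\bigl(1+\phi(p)\bigr)$ on $[1,\infty)$ --- equivalently the variance comparison $(\log M)''(p)\ge\psi_1(p+1)$ with $M(p)=\E|\cE-1|^p$ --- and this is never proved. ``I would first attempt\dots\ failing a slick closed-form proof\dots\ interval-arithmetic verification'' is a plan, not an argument. The surrounding logic is sound: $h$ and $G$ do share a sign since $h(p)=\Gamma(p+1)\bigl(1-e^{-G(p)}\bigr)$, and if $G$ were strictly concave with $G(1)=0$, $G(2)=3\log 2-2>0$ and $G(p)\to-\infty$, then $G$ would cross zero exactly once on $(1,\infty)$ with the advertised signs. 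But the convexity is the crux, and it is missing.

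By contrast, the paper dispatches uniqueness without any convexity, by recycling its crossing machinery. Writing $h(p)=\int_0^{+\infty}x^p\bigl(f_1(x)-f_0(x)\bigr)\,\dd x$ with $f_1,f_0$ the densities of $|\bar\cE_1|$ and $|\bar\cE_0|$, the analysis of Lemma~\ref{lm:3crossings} (applied at $t=0$) shows $f_1-f_0$ changes sign exactly three times with pattern $(+,-,+,-)$. Since $\int(f_1-f_0)=\int x(f_1-f_0)=0$ automatically and $\int x^{p_0}(f_1-f_0)=0$ by the definition of $p_0$, one may subtract any $\alpha+\beta x+\gamma x^{p_0}$ from $x^p$; Lemma~\ref{lm:power-functions} then makes the integrand pointwise of one sign, pinning down $\sgn h(p)$ for every $p\neq p_0$ at once. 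The only numerics required is bracketing $p_0$ in a short interval, not settling uniqueness. If you can actually establish the variance inequality you reduced to, your approach would be a legitimate (and arguably tidier) alternative; as written, it does not close the lemma.
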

\begin{proof}
Applying Lemma \ref{lm:Lp-explicit} to $t = 0$ yields $\E|\cE-1|^p = e^{-1}\left(\int_0^1x^pe^x \dd x  +\Gamma(p+1)\right)$. It is then checked numerically that $h(2.9414) > 10^{-5}$ and $h(2.9415) < -10^{-5}$, thus $h(p)=0$ has a root $p_0$ in $(2.9414, 2.9415)$. Moreover, $h(1) = 0$.
It remains to argue that $h$ has no more roots on $[1,{+\infty})$ with the desired sign pattern. To this end, we use Lemma \ref{lm:3crossings} (and its notation, namely that $f_1$ is the density $e^{-x}$ on $(0,{+\infty})$ and $f_0$ is the density of $\frac{e}{2}|\cE-1|$) to obtain
\[ 
h(p) = \int_0^{+\infty} x^p(f_1(x) - f_0(x)){\red \dd x,}
 \]
with $f_1 - f_0$ changing sign exactly $3$ times with the sign pattern $(+,-,+,-)$.
To finish, we employ a crossing argument. We can rewrite the last integral, with arbitrary $\alpha, \beta, \gamma \in \R$, as
\[ 
h(p) = \int_0^{+\infty} \Big(x^p-(\alpha+\beta x + \gamma x^{p_0})\Big)\cdot\Big(f_1(x) - f_0(x)\Big)\dd x.
 \]
We apply Lemma \ref{lm:power-functions} to choose $\alpha, \beta, \gamma$ so that the first parenthesis changes sign exactly at the sign changes of the second one and it {\red has} the sign pattern $(+,-,+,-)$ when $1 < p < p_0$, so $h(p) > 0$ for those $p$, and the reverse holds when $p > p_0$.
\end{proof}

\subsection*{Acknowledgements.}
We are indebted to Silouanos Brazitikos for the useful discussions as well as kind hospitality at the University of Crete. We would like to thank the anonymous referee for their many invaluable comments and suggestions.

JM is supported by CONAHCYT grant CBF2023-2024-3907. MR's research supported in part by an AMS-Simons Travel Grant and in part by NSF Grant DMS-2548742 (formerly NSF DMS-2452384). CT and TT's research supported in part by NSF grant DMS-2246484.
Moreover, this material is, in part, based upon work supported by the National Science Foundation under Grant No. DMS-1929284 while MR and CT were in residence at the Institute for Computational and Experimental Research in Mathematics in Providence, RI, during the Harmonic Analysis and Convexity program December 2024.

\end{document}